\newcommand{\textoverline}[1]{$\overline{\mbox{#1}}$}
 \newtheorem{theorem}{{\rm T\sc heorem}}[section]
 \newtheorem{lemma}[theorem]{{\rm L\sc emma}}
 \newtheorem{proposition}[theorem]{{\rm P\sc roposition}}
 \newtheorem{definition}[theorem]{{\rm D\sc efinition}}
 \theoremstyle{definition}
 \newtheorem{remark}[theorem]{{\rm R\sc emark}}
  \newtheorem*{acks}{Acknowledgments}
\DeclareMathOperator{\aut}{Aut}
\DeclareMathOperator{\pic}{Pic}
\DeclareMathOperator{\bl}{Bl}
\DeclareMathOperator{\mw}{MW}
\DeclareMathOperator{\ns}{NS}
\DeclareMathOperator{\numer}{Num}
\begin{document}
 \title {Severi varieties on Enriques surfaces of base change type and on rational elliptic surfaces}
 \author[Simone Pesatori]{Simone Pesatori}
 \address{Università degli Studi Roma Tre\\
        Largo San Leonardo Murialdo, 1\\
         Rome\\
         Italy}

 \begin{abstract} If an irreducible curve on the very general Enriques surface splits in the K3 cover, its preimage consists of two linearly equivalent irreducible curves. We prove the nonemptiness of countable families of Severi varieties of curves of any genus on Enriques surfaces of base change type, whose members split in nonlinearly equivalent curves in the K3 cover. Our machinery leads us to provide examples of special superabundant logarithmic Severi varieties of curves of any genus on rational elliptic surfaces.
\end{abstract}
 \maketitle
 \pagestyle{myheadings}\markboth{\textsc{ Simone Pesatori }}{\textsc{Severi varieties on Enriques surfaces of base change type and on rational elliptic surfaces}}

\section{Introduction}

This paper addresses the topic of Severi varieties of curves on Enriques surfaces of base change type and on rational elliptic surfaces.\par
Severi varieties parametrize curves with a prescribed number of nodes (or with a fixed geometric genus, or with given tangency conditions to a fixed curve) in a fixed linear system. Severi varieties were introduced by Severi in \cite{Se}, where he proved that all Severi varieties of irreducible $\delta$-nodal curves of degree $d$ in $\mathbb{P}^2$ are nonempty and smooth of the expected dimension. Their irreducibility was proved by Harris in \cite{H}. Severi varieties on other surfaces have received much attention in recent years, especially in connection with enumerative formulas computing their degrees (see for example \cite{B}, \cite{BOP} and \cite{CH}). Nonemptiness, smoothness, dimension and irreducibility for Severi varieties have been widely investigated on various rational surfaces (see, e.g., \cite{GLS}, \cite{Ta} and \cite{Te}), as well as K3, Enriques and abelian surfaces (see, e.g.,\cite{Che}, \cite{KL}, \cite{KLM}, \cite{LS}, \cite{BL}, \cite{MM}, \cite{CDGK} and \cite{CDGK2}). \par
It is well-known that every Enriques surface is equipped with several genus 1 pencils. Every genus 1 fibration $Y\rightarrow\mathbb{P}^1$ on an Enriques surface $Y$ gives rise to a genus 1 fibration $X\rightarrow\mathbb{P}^1$ on its K3 cover, which, in general, does not admit a section. \textit{Special} elliptic fibrations are characterized among elliptic fibrations of $Y$ by the fact that they admit a smooth rational bisection, which necessarily splits into two (disjoint) sections on $X$. More generally, Hulek and Sch\"utt in \cite{HS} studied the so-called $m$-special genus 1 fibrations on $Y$ that are characterized by the fact that they admit a (necessarily rational) bisection that splits into two sections on $X$ with intersection number $2m$. This construction yields countably many distinct divisors in the 10-dimensional moduli space of Enriques surfaces, whose members are called Enriques surfaces \textit{of base change type} or \textit{$m$-special} Enriques surfaces.\par In \cite{Pesa}, the author proves that the bisections that appear in the construction of Hulek and Sch\"utt are nodal for very general members of all the 9-dimensional families. Moreover, the author proves the existence of nodal bisections of arbitrarily large arithmetic genus on any fixed such surface.\par
 The construction of Hulek and Sch\"utt strongly involves some rational elliptic surfaces, which turn out to be covered by the same K3 surface which covers the $m$-special Enriques surface. In particular, we have the following diagram
 \begin{center}
                    \[\begin{tikzcd}[ampersand replacement=\&,row sep=large,column sep=huge]
                       \& X_m\arrow[dl] \arrow[dr]\\ S\&\&Y_m
                    \end{tikzcd}\]
\end{center} 
Here $S$ is a rational elliptic surface, $Y_m$ is an $m$-specal Enriques surface and $X_m$ is a K3 surface dominating both as double covering.\par
 The first result of this paper is the proof of the existence of some  particular equigeneric Severi varieties of curves on every $Y_m$, which we call \textit{special nonregular}. Their members split in the K3 cover in two nonlinearly equivalent curves. This cannot happen on the very general Enriques surface: as pointed out by Ciliberto, Dedieu, Galati and Knutsen in \cite{CDGK}, if an irreducible nodal curve on the very general Enriques surface splits in its universal cover, then its preimage necessarily consists of two linearly equivalent curves. After noticing that their result also holds for nonnodal curves, we prove that every $Y_m$ is provided with lots of such special varieties. More precisely, we prove the following theorem.
 \begin{theorem}\label{teorenr}
     For every $n\in\mathbb{N}$, every $m$-special Enriques surface $Y_m$ admits a countable number of families of special nonregular components of some Severi varieties of curves of genus $n$.
 \end{theorem}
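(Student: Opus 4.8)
The plan is to realise the desired curves on $Y_m$ as images, under the \'etale double cover $q\colon X_m\to Y_m$, of curves on the K3 cover $X_m$ that are not invariant under the covering involution $\iota$. The starting point is an elementary descent dictionary: if $C_1\subset X_m$ is an irreducible curve with $\iota(C_1)\neq C_1$, then $C':=q(C_1)$ is an irreducible curve with $q^{-1}(C')=C_1\cup\iota(C_1)$, so $C'$ splits in the cover, and it is \emph{nonregular} precisely when $[C_1]\neq\iota^{*}[C_1]$ in $\operatorname{Pic}(X_m)$. A local analysis at singular points — using that $\iota$ acts freely — shows that $C'$ and $C_1$ have the same normalisation, hence the same geometric genus, and that if $C_1$ is nodal and meets $\iota(C_1)$ transversally and away from $\operatorname{Sing}(C_1)$, then $C'$ is nodal with exactly $\#\operatorname{Sing}(C_1)+\tfrac12\,C_1\cdot\iota(C_1)$ nodes. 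Thus it suffices to produce, for each $n$, countably many classes $L$ on $X_m$ with $\iota^{*}L\neq L$ such that the equigeneric Severi variety of geometric-genus-$n$ curves in $|L|$ is nonempty and its general member is in sufficiently general position with respect to its $\iota$-translate.

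To produce classes $L$ with $\iota^{*}L\neq L$, recall from the Introduction that $X_m$ sits in a double cover $p\colon X_m\to S$ of the rational elliptic surface $S$, branched over two smooth fibres, and that $\iota=\sigma\circ t_T$, where $\sigma$ is the deck involution of $p$ and $t_T$ is translation by (the pullback of) the nontrivial $2$-torsion section $T$ of $S$. The curve $B_1$ of the Hulek--Sch\"utt construction then satisfies $\iota(B_1)=B_2$, with $[B_1]\neq[B_2]$ since $B_1\cdot B_2=2m>B_1^{2}$; more generally, for any section $P$ of $S$ the section $\widetilde P:=p^{-1}(P)$ of $X_m\to\mathbb{P}^1$ satisfies $\iota(\widetilde P)=p^{-1}(P+T)$ and $[\widetilde P]\neq\iota^{*}[\widetilde P]$, because $p^{*}$ is injective on the torsion-free groups $\operatorname{Pic}(S)\hookrightarrow\operatorname{Pic}(X_m)$ and the distinct sections $P$, $P+T$ of $S$ are not linearly equivalent. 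Fix one such irreducible curve $D$ (say $D=B_1$) and a fixed $\iota$-invariant ample class $H$ on $X_m$ (e.g.\ $H=H_0+\iota^{*}H_0$ with $H_0$ ample).

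For every integer $k\ge 3$ set $L_k:=D+kH$. Then $L_k$ is ample (Nakai--Moishezon: $L_k\cdot C\ge -2+k>0$ for every irreducible curve $C$, and $L_k^{2}>0$) and meets every curve of positive arithmetic genus in degree $\ge 3$, so by the standard theory of linear systems on K3 surfaces $|L_k|$ is base-point free with general member a smooth irreducible curve of genus $g_k:=p_a(L_k)=1+\tfrac12 L_k^{2}$; moreover $g_k\to\infty$ while $\iota^{*}L_k=\iota^{*}[D]+kH\neq L_k$. Given $n$, choose $k$ with $g_k\ge n$. By the theory of equigeneric Severi varieties on K3 surfaces (see e.g.\ \cite{Che}), the variety of irreducible geometric-genus-$n$ curves in $|L_k|$ is nonempty of dimension $n$, its general member $C_1$ being nodal with $g_k-n$ nodes. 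A general-position argument — the delicate point — shows that for general such $C_1$ the intersection $C_1\cap\iota(C_1)$, which is reduced and, since $\iota$ is fixed-point free, of even cardinality $C_1\cdot\iota(C_1)$, is transversal and disjoint from $\operatorname{Sing}(C_1)$. By the first paragraph, $C':=q(C_1)$ is then an irreducible nodal curve on $Y_m$ of geometric genus $n$ which splits in $X_m$ into the nonlinearly equivalent curves $C_1,\iota(C_1)$; it lies in the Severi variety $V_{|M|,\delta}(Y_m)$, where $q^{*}M=L_k+\iota^{*}L_k$ and $\delta=(g_k-n)+\tfrac12\,L_k\cdot\iota^{*}L_k$.

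Finally I would check that these curves sweep out whole components, and that one gets countably many. Since $\iota^{*}L_k\neq L_k$, the assignment $C_1\mapsto q(C_1)$ is injective on $|L_k|$, so its image $W\subseteq V_{|M|,\delta}(Y_m)$ has dimension $n$; and the Zariski tangent space to $V_{|M|,\delta}(Y_m)$ at $[C']$ equals $H^{0}$ of the equisingular normal sheaf pulled back to the normalisation $\widetilde{C_1}$ of $C_1$, which the node count of the first paragraph identifies with $K_{\widetilde{C_1}}$ (the twist by the points of $C_1\cap\iota(C_1)$ occurring in $\mathcal{O}_{X_m}(C_1)|_{C_1}$ is exactly undone by the equisingularity conditions at the corresponding nodes of $C'$). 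Hence $\dim T_{[C']}V_{|M|,\delta}(Y_m)=g(\widetilde{C_1})=n=\dim W$, so $W$ is a union of components of $V_{|M|,\delta}(Y_m)$, each special nonregular by construction — and, incidentally, superabundant, the expected dimension of a Severi variety of genus-$n$ curves on an Enriques surface being $n-1$. Letting $k$ range over the integers with $k\ge 3$ and $g_k\ge n$ yields pairwise distinct such components: indeed $q^{*}M=D+\iota^{*}D+2kH$, so $2M^{2}=(D+\iota^{*}D+2kH)^{2}$ is a quadratic in $k$ with positive leading coefficient $8H^{2}$, hence strictly increasing for $k$ large, so the classes $M$ — and the Severi varieties $V_{|M|,\delta}(Y_m)$ — are pairwise distinct. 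The conceptual core is the second paragraph — the presence on $X_m$, unlike on the very general Enriques surface, of curve classes moved by $\iota^{*}$, which comes down entirely to the non-triviality of the $2$-torsion section $T$; the genuinely technical work is the transversality claim in the third paragraph, i.e.\ ruling out that the general member of the K3 Severi variety is forced to be tangent to, or to share a node with, its own $\iota$-image.
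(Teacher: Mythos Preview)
Your proposal rests on two genuine errors, one structural and one technical.

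\textbf{The construction is misdescribed.} The Enriques involution on $X_m$ is \emph{not} $\sigma\circ t_T$ for a $2$-torsion section $T$ of $S$. In the Hulek--Sch\"utt construction the involution is $\tau=\iota\circ(\boxminus R_m)$, where $\iota$ is the deck transformation of $X_m\to S$ and $R_m$ is a section of $X_m\to\mathbb{P}^1$ which is \emph{anti-invariant} under $\iota^*$ (i.e.\ $\iota^*R_m=\boxminus R_m$) and of \emph{infinite order} in $\mw(X_m)$; it does not descend to $S$. The involutivity of $\tau$ comes from the anti-invariance, not from $2$-torsion. Consequently your computation $\tau(\widetilde P)=p^{-1}(P+T)$ is wrong: in fact $\tau(\widetilde P)=\widetilde P\boxplus R_m$, which is not a pullback from $S$ at all. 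Your final sentence, attributing the whole phenomenon to ``the non-triviality of the $2$-torsion section $T$'', is therefore mistaken; what actually matters is that $R_m$ has infinite order. (Your conclusion that $\tau^*[\widetilde P]\neq[\widetilde P]$ happens to be correct, but for the wrong reason.)

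\textbf{The nonemptiness step is unjustified.} You invoke \cite{Che} for the nonemptiness of the genus-$n$ Severi variety in $|L_k|$ on $X_m$. Chen's result is for the \emph{general} polarized K3 surface; the surfaces $X_m$ have Picard rank $11$ and are very far from general. Nonemptiness of equigeneric Severi varieties in an arbitrary ample class on an arbitrary K3 is not known, so this is a real gap, not a technicality. The transversality claim you flag as ``the genuinely technical work'' is likewise left open.

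The paper bypasses both problems by a much more explicit and self-contained route: it never appeals to Severi theory on K3 surfaces. Instead it starts from a smooth curve $C$ on $S$ (or on $Y_m$), pulls it back to $X_m$, and \emph{translates} by $R_m^{\boxplus k}$ to obtain $C_{X,m}^{\boxplus k}$. A direct fibrewise computation (Proposition~\ref{partenzares}) shows $\tau(C_{X,m}^{\boxplus k})=C_{X,m}^{\boxplus(1-k)}$; comparing the two restrictions to a general fibre, linear equivalence would force $R_m^{\boxplus r(2k-1)}$ to be trivial on each fibre, contradicting the infinite order of $R_m$. This simultaneously gives the splitting and the non-linear-equivalence. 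The geometric genus of the image on $Y_m$ is then read off from Riemann--Hurwitz for $X_m\to S$ (yielding $2p+l-1$ for an $l$-section of genus $p$ on $S$) or for $X_m\to Y_m$ (yielding $2g-1$), and every $n$ is hit by choosing $C$ in the explicit system $|kL-(k-1)E_1|$ on $S$ for even $n$ and a genus-$g$ curve on $Y_m$ for odd $n$. Countability comes from varying the translation parameter $k$, not the class of the starting curve.
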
 
 
 The second result concerns the so-called \textit{logarithmic} Severi varieties of curves, which parametrize curves in a fixed linear system with given tangency conditions to a fixed curve. They have been studied by Dedieu in \cite{De} and his results are based on the works of Caporaso and Harris (see for example \cite{CH}). We study the case of rational elliptic surfaces and provide examples of what we call \textit{special superabundant} logarithmic Severi varieties of curves. They have dimension greater than expected and parametrize curves which are totally tangent to the branch locus of $X_m\rightarrow S$, splitting on $X_m$ in nonlinearly equivalent curves. The idea is similar to the one presented in \cite{De} and \cite{CD}, where Ciliberto and Dedieu provide examples of superabundant logarithmic Severi varieties on the plane exploiting double covers, but the curves on the Severi varieties they consider split in linearly equivalent curves on the surface upstair. Our result is given in the following theorem. 
 \begin{theorem}\label{teorres}
     For every $n\in\mathbb{N}$, the general rational elliptic surface admits a countable number of families of special superabundant components of some logarithmic Severi varieties of curves of genus $n$.
 \end{theorem}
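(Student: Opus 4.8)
The plan is to run the double-cover construction behind Theorem~\ref{teorenr} through the rational quotient $S$ of $X_m$ rather than through the Enriques quotient. Let $\pi\colon X_m\to S$ be the degree-$2$ map in the diagram above, with covering involution $\iota$; its branch curve $B\subset S$ is a union of two fibres of the elliptic fibration, so the ramification curve $R\subset X_m$ is pointwise fixed by $\iota$, satisfies $\pi^*B=2R$, and lies in the class $R\sim-\pi^*K_S\sim 2\tilde F$, where $\tilde F$ is the fibre class of the induced elliptic fibration on the K3 surface $X_m$. Exactly as in the proof of Theorem~\ref{teorenr}, but with $\iota$ in place of the Enriques involution, I would first fix, for the given $n$, a countable family of classes $L$ on $X_m$ with $\iota^*L\neq L$, with general member of $|L|$ irreducible, and with $p_a(L)=\tfrac12 L^2+1$ arbitrarily large; set $\delta_0:=p_a(L)-n$ and let $D$ be the class on $S$ determined by $\pi^*D=L+\iota^*L$.

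Next I would produce the curves. As in the proof of Theorem~\ref{teorenr}, the Severi variety $V_{|L|,\delta_0}$ of irreducible $\delta_0$-nodal curves in $|L|$ is nonempty of dimension $\dim|L|-\delta_0=p_a(L)-\delta_0=n$. For a general $\tilde C\in V_{|L|,\delta_0}$ one has $\iota^*\tilde C\neq\tilde C$, so $\pi$ maps $\tilde C$ birationally onto an irreducible curve $C:=\pi(\tilde C)$ of geometric genus $n$, and $\pi^{-1}(C)=\tilde C\cup\iota^*\tilde C$ is the union of the two \emph{nonlinearly equivalent} curves $\tilde C\in|L|$ and $\iota^*\tilde C\in|\iota^*L|$. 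Since $\pi^*B=2R$ and $\pi$ is a fold along $R$, a local computation shows that $C$ is totally tangent to $B$, with one contact point of order $2$ for each of the $L\cdot R$ points of $\tilde C\cap R$, and that the remaining singularities of $C$ are $\delta_0$ nodes (images of the nodes of $\tilde C$) together with $\tfrac12(L\cdot\iota^*L-L\cdot R)$ nodes (images of the $\iota$-orbits of $\tilde C\cap\iota^*\tilde C$ off $R$); in particular $C$ is nodal of geometric genus $n$. As $\tilde C\mapsto C$ is injective on $V_{|L|,\delta_0}$ (the second component $\iota^*\tilde C$ of $\pi^{-1}(C)$ lies in $|\iota^*L|\neq|L|$), the closure $\Sigma$ of the locus of such curves $C$ is an irreducible family of dimension $n$ contained in the logarithmic Severi variety $V$ of $\delta_C$-nodal curves of geometric genus $n$ in $|D|$ totally tangent to $B$, where $\delta_C:=p_a(C)-n$.

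Then comes the dimension count. As $S$ is a rational elliptic surface, Riemann--Roch (for $L$ chosen so that $|D|$ is non-special) gives $\dim|D|-p_a(C)=-D\cdot K_S-1=D\cdot F_S-1$, where $F_S$ is the fibre class of $S$; total tangency to $B$ imposes $\tfrac12 D\cdot B=D\cdot F_S$ conditions on $|D|$, one per contact point, and the $\delta_C$ nodes impose $\delta_C$ conditions, so the expected dimension of $V$ is
\[
\dim|D|-\delta_C-D\cdot F_S=(D\cdot F_S-1)+n-D\cdot F_S=n-1<n=\dim\Sigma .
\]
To see that $\Sigma$ is in fact an irreducible component of $V$ --- so that its general member really is special, i.e.\ totally tangent to $B$ and split on $X_m$ into nonlinearly equivalent curves --- I would compute $T_CV$ for $C\in\Sigma$ general by pulling sections of $N_{C/S}$ back along $\pi$: an equisingular, log-tangent first-order deformation of $C$ lifts to an $\iota$-invariant first-order deformation of $\pi^{-1}(C)=\tilde C\cup\iota^*\tilde C$ which preserves every node of $\pi^{-1}(C)$, in particular those along which the two components meet; hence it keeps $\pi^{-1}(C)$ reducible and is nothing but a deformation of $\tilde C$ inside $V_{|L|,\delta_0}$, whence $\dim_CV=n$. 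Thus $\Sigma$ is a special superabundant component of $V$, and letting $L$ run over the countable family fixed at the outset produces countably many of them, for every $n$.

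The real work lies not in the intersection-number bookkeeping above but in the genericity statements feeding the construction of the curves $C$: one has to verify, by a dimension count on the K3 side, that the general $\tilde C\in V_{|L|,\delta_0}$ meets both $R$ and $\iota^*\tilde C$ transversally and that the resulting contact points and nodes of $C$ are mutually distinct and disjoint from $B$, so that $C$ is genuinely a general point of the expected stratum and the singularity count is exact. Granting this, the deformation-theoretic identification in the last step is essentially formal, because preserving the nodes and the order-$2$ contacts of $C$ already forces the entire nodal configuration of $\pi^{-1}(C)$ to be preserved, and with it the splitting.
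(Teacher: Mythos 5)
Your overall strategy --- push non-$\iota$-invariant curves on $X_m$ down to $S$, observe that the images are totally tangent to the two branch fibres and split into the two non-linearly-equivalent components $\tilde C$ and $\iota^*\tilde C$, and compare the actual dimension of the resulting family with Dedieu's expected dimension --- is the paper's strategy, and your count ($n$ versus $n-1$) matches the paper's ($2p+l-1$ versus $2p+l-2$ in Proposition \ref{logpartenzares}). The genuine gap is in how you reach an arbitrary genus $n$. You fix a class $L$ on $X_m$ with $p_a(L)$ large, set $\delta_0=p_a(L)-n>0$, and assert that the nodal Severi variety $V_{|L|,\delta_0}$ on the K3 surface $X_m$ is nonempty ``as in the proof of Theorem \ref{teorenr}''. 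But the paper's proof of Theorem \ref{teorenr} never invokes Severi varieties of nodal curves on the K3 cover: it only uses \emph{smooth} general members of the complete linear systems $|g^{-1}(C)\boxplus R_m^{\boxplus k}|$ (Lemma \ref{complete}), and it realizes every genus by varying the class \emph{downstairs} --- for even $n=2k$ it takes the genus-$0$ $(2k+1)$-section $kL-(k-1)E_1$ on $S$, so that $2p+l-1=2k$, and odd genera come from curves on $Y_m$ via Proposition \ref{logpartenzaenr}. Nonemptiness of $V_{|L|,\delta}$ for all $0\le\delta\le p_a(L)$ is available for general K3 surfaces, but the surfaces $X_m$ are special (Picard number $11$, with the extra infinite-order section $R_m$), and no such nonemptiness statement is proved in the paper or supplied by the references; without it your construction produces no curves at all for most values of $n$.

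Two smaller remarks. First, even granting nonemptiness, you would still have to verify the genericity statements you defer to the end (transversality of $\tilde C$ with $R$ and with $\iota^*\tilde C$, nodes and contact points mutually distinct); the paper sidesteps all of this precisely because its curves upstairs are smooth members of complete linear systems, so the only singularities of the image are the $\iota$-orbits of $\tilde C\cap\iota^*\tilde C$. Second, your tangent-space argument that $\Sigma$ is an entire irreducible component of the logarithmic Severi variety is actually \emph{more} than the paper establishes (the paper only exhibits the dimension discrepancy for the family $V_{S,C,k}$), so that part is a welcome addition --- but it does not repair the missing K3 input, which is the step on which the whole construction of the curves depends.
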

 In un upcoming paper, we exploit our machinery to study in details the particular case of genus 1 fibrations on $X_m$ and their behaviour with respect to the Enriques involution and to the involution giving rise to the rational elliptic surface.\par
 
 In Section \ref{sec2} we recall the basics about the surfaces involved in the paper, such as K3 surfaces, Enriques surfaces (of base change type), rational elliptic surfaces and the genus 1 pencils lying on them. Furthermore, we analyze the behaviour of the curves on the K3 surface $X_m$ with respect to the Enriques involution and to the rational involution. In Section \ref{sec3} we prove the main results.

 \begin{acks}This work is the continuation of a part of my PhD Thesis and I want to thank my advisor Andreas Leopold Knutsen for the fruitful conversations which inspired this paper. \end{acks}

\section{Rational elliptic surfaces and Enriques surfaces of base change type\label{sec2}}

We recall the basics about elliptic surfaces, K3 surfaces and Enriques surfaces. As general references the reader might consult \cite{M}, \cite{BHPV} or \cite{CD}. \par

For us a \textit{genus one fibration} is a morphism $f:X\rightarrow C$, where $X$ is an algebraic surface and $C$ is a smooth curve, such that the general fiber is a smooth curve of genus one. If there is a section $s:C\rightarrow X$, we say that $f:X\rightarrow C$ is an \textit{elliptic fibration} (with a given section), and $X$ is an \textit{elliptic surface} over $C$.\par
If $X\rightarrow C$ is equipped with a chosen section $s$ the set of sections is an abelian group with the group addition defined fiberwise. The group of the sections of $X\rightarrow C$ is called Mordell-Weil group of the elliptic surface, denoted by $\mw(X\rightarrow C)$ or simply $\mw(X)$ if the surface has only one elliptic fibration or if it is clear what fibration we are referring to. The chosen section $s$, which is the zero element of $\mw(X)$, is called the zero-section. \par
By \cite[Lemma IV.1.2]{M}, every rational elliptic surface $S$ arises as the blow up of the projective plane $\mathbb{P}^2$ at nine points $P_1,\dots,P_9$ which are the base points of a pencil of cubic curves.\par
It is well-known that every rational elliptic surface has twelve singular fibers counted with multiplicity: in this work, unless differently specified, we will consider just rational elliptic surfaces with twelve nodal curves as singular fibers.
\begin{definition}
    A rational elliptic surface is \textit{general} if it has twelve nodal curves as singular fibers.
\end{definition}
We call $E_1,\dots,E_9$ the exceptional divisors over the points $P_1,\dots,P_9$. With this notation, the Picard group of $S\cong\bl_{\{P_1,\dots,P_9\}}\mathbb{P}^2$ is 
\begin{center}$\pic(S)\cong\mathbb{Z}L\oplus\mathbb{Z}E_1\oplus\dots\oplus\mathbb{Z}E_9$,\end{center}
with $L$ the pullback of a line in $\mathbb{P}^2$. We choose the last exceptional divisor $E_9$ to be the zero-section of the fibration. 
 With the previous notations, the Mordell-Weil group of  $S$ is \begin{center} $\mw(S)\cong\mathbb{Z}^8$, \end{center}and it is generated by the exceptional divisors $E_1,\dots,E_8$. The neutral element is the zero-section $E_9$.\par
We recall the basics about K3 surfaces and Enriques surfaces. As general references the reader might consult \cite{BHPV} or \cite{CD}. 

\begin{definition}
    A smooth projective surface $X$ is called \textit{K3 surface}
if $X$ is simply connected with trivial canonical bundle $\omega_X\cong\mathcal{O}_X$.\\
An \textit{Enriques surface} $Y$ is a quotient of a K3 surface $X$ by a fixed point free involution. Such an involution is also called \textit{Enriques involution}.
\end{definition}
For any Enriques surface $Y$ there is two-torsion in $\ns(Y)$ represented by the canonical divisor $K_Y$. The quotient $\ns(Y)_f$ of $\ns(Y)$ by its torsion subgroup is an even unimodular lattice, which is isomorphic to the so-called \textit{Enriques lattice}: \begin{center}
    $\numer(Y)=\ns(Y)_f\cong U\oplus E_8(-1)$.
\end{center}
The Picard rank of any Enriques surface is equal to 10.

\begin{definition}\label{verygenenr}
    We say that an Enriques surface $Y$ is \textit{Picard very general} if its universal cover $X$ is such that \begin{center}
        $\ns(X)\cong U(2)+E_8(-2)$.
    \end{center}
\end{definition}
\begin{remark}
    An Enriques surface $Y$ is Picard very general if and only if the Picard rank of its universal cover $X$ is equal to $10$.
\end{remark}
K3 and Enriques surfaces are the only surfaces that may admit more than one genus 1 pencil.
It is well-known that every genus 1 pencil on an Enriques surface has exactly two fibers of multiplicity two, called \textit{half-fibers}. The canonical divisor of $Y$ can be represented as the difference of the supports of the half-fibers of a genus 1 pencil: if $F$ is a genus 1 pencil of $Y$ and \begin{center}
    $2E_1\sim F$ and $2E_2\sim F$, then $K_Y\sim E_1-E_2$.
\end{center}
Ohashi in \cite{Oa} has classified K3 surfaces with Picard number $\rho=11$ and an Enriques involution. The Enriques quotient of such K3 surfaces are not Picard very general. Hulek and Sch\"utt in \cite{HS} show a construction of one type of such K3 surfaces, obtained starting from a rational elliptic surface. We are going to review their construction. 
\subsection{\rm E\sc nriques surfaces of base change type}
Let $S=\bl_{\{P_1,\dots,P_9\}}\mathbb{P}^2$ denote a general rational elliptic surface. We let now \begin{center}
    $g:\mathbb{P}^1\rightarrow\mathbb{P}^1$
\end{center} be a morphism of degree two. Denote the ramification points by $t_0$ and $t_{\infty}$. It is well-known that the pullback of $X$ via $g$ is a K3 surface under the assumption that the fibers of $S$ over $t_0$ and $t_{\infty}$ are smooth. With abuse of notation, we denote by $g$ also the double cover \begin{center} $X\rightarrow S$  \end{center} and we denote by $\tilde{E}_i$ the pull-backs of the exceptional divisors $E_i$ of $S$: with this notation, $\tilde{E}_9$ is the zero-section for the induced elliptic fibration on $X$. \par Moreover, we denote by $S_t$ the fiber on $S$ over a point $t\in\mathbb{P}^1$ and by $X_t$ and $X_{-t}$ the two components of its preimage on $X$ (if $t\neq t_0,t_{\infty}$). Since $S_t\cong X_t\cong X_{-t}$, if $Q_t\in S_t$, we denote the two points in its preimage $g^{-1}(Q_t)$ by $\tilde{Q_t}$ and $\tilde{Q}_{-t}$ respectively. Sometimes, we refer to the pair $X_t$ and $X_{-t}$ as \textit{twin fibers}, to the pair $\tilde{Q_t}$ and $\tilde{Q}_{-t}$ as \textit{twin points in twin fibers} and to the pair $\tilde{Q_t}$ and $\boxminus\tilde{Q}_{-t}$ as \textit{opposite points in twin fibers} (with respect to $\tilde{E}_9$). \par
Let $\iota$ denote the deck transformation for $g$, i.e. $\iota\in\aut(\mathbb{P}^1)$ such that $g=g\circ\iota$. 
Then $\iota$ induces an automorphism of $X$ that we shall also denote by $\iota$. The quotient $X/\iota$ returns exactly the rational elliptic surface $S$ we started with. 

\begin{remark}
    We obtain a ten-dimensional family of elliptic K3 surfaces: eight dimensions from the rational elliptic surfaces and two dimensions from the choice of the ramification for the base change. 
\end{remark}

\begin{definition}\label{vergenbaschan}
    We say that such a K3 surface $X$ \textit{base change very general} if $S$ is general, $S_{t_0}$ and $S_{t_{\infty}}$ are smooth elliptic curves and $\iota^*$ acts as the identity on $\ns(X)$.
\end{definition}

\begin{remark}
    As pointed out in \cite[Section 3.2]{HS}, a base change very general K3 surface $X$ does not carry any Enriques involution.
\end{remark}

 In \cite{HS}, Hulek and Sch\"utt impose a geometric condition on the base change $g:X\rightarrow S$ that allows to construct (a countable number of) families of K3 surfaces with an Enriques involution.
 In order to exhibit K3 surfaces with Enriques involution within our family of K3 surfaces of base change type, we need the following lemma, that summarizes the discussion in \cite[Section 3.3]{HS}. This generalizes the original result given by Kond\textoverline{o} in \cite[Lemma 2.6]{Ko} for nodal Enriques surfaces.\par
We say that a section $P$ on the elliptic fibration on $X$ is anti-invariant with respect to $\iota^*$ if $\iota^*(P)=(-1)^*(P)$, where $(-1)$ indicates the involution on $X$ acting fiberwise by interchanging opposite points with respect to the zero-section $\tilde{E}_9$.
\begin{lemma}[Hulek-Sch\"utt]
    Let $P$ be a section for the elliptic fibration on $X$ induced by the one of $S$. Then \begin{itemize}
        \item either $P$ is invariant with respect to $\iota^*$,
        \item or $P$ is anti-invariant with respect to $\iota^*$.
    \end{itemize}
    Moreover, in the former case, $P$ is the pull-back of a section $E\in\mw(S)$ and it cuts twin points in twin fibers, while in the latter $P$ cuts opposite points in twin fibers.
\end{lemma}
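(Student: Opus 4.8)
The plan is to use that the deck transformation $\iota$ of $g\colon X\to S$ preserves the elliptic fibration of $X$ and fixes its zero section, so that $\iota^*$ is an involution of $\mw(X)$, and then to extract the two cases from the eigenspace decomposition of $\mw(X)\otimes\mathbb{Q}$ together with the behaviour of $g\colon X\to S$ over the two branch points of the base change. Concretely, $\iota$ covers a nontrivial involution of the base $\mathbb{P}^1$, hence carries fibres of $X\to\mathbb{P}^1$ to fibres, and it fixes $\tilde E_9=g^*E_9$. Therefore $\iota^*$ preserves the fibre class and the zero section, so it acts as a group automorphism of $\mw(X)$, and $\iota^2=\mathrm{id}$ makes it an involution. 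Since $\iota$ restricts to a group isomorphism $X_s\to X_{-s}$ on each smooth fibre (it sends $\tilde E_9\cap X_s$ to $\tilde E_9\cap X_{-s}$), it commutes with fibrewise inversion, which is simply negation on $\mw(X)$; thus ``$P$ invariant'' means $\iota^*P=P$, ``$P$ anti-invariant'' means $\iota^*P=(-1)^*P=-P$, both cases put $P$ in an $\iota^*$-eigenlattice, and in general $\mw(X)\otimes\mathbb{Q}=\mw(X)^+\oplus\mw(X)^-$.

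Next I would identify the $(+1)$-eigenlattice. If $\iota^*P=P$, then $\iota|_P$ is a nontrivial involution of $P\cong\mathbb{P}^1$ — nontrivial because $P$ maps isomorphically onto the base, on which $\iota$ acts nontrivially — with its two fixed points over the branch points; so the image $\bar P$ of $P$ in $X/\iota=S$ is a smooth rational curve meeting each fibre of $S\to\mathbb{P}^1$ once, i.e.\ a section $E\in\mw(S)$, and a fibrewise comparison gives $P=g^*E$. Conversely every $g^*E$ is $\iota$-invariant (since $g\circ\iota=g$), and $g^*$ is injective, so $\mw(X)^+=g^*\mw(S)\cong\mathbb{Z}^8$. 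The substance of the lemma is the dichotomy itself — that every section is an $\iota^*$-eigenvector, with no ``mixed'' part. Here the eigenspace splitting alone is not enough (it only yields $2P\in\mw(X)^+\oplus\mw(X)^-$): I would combine it with the explicit shape of $\mw(X)$. All singular fibres of $X$ are irreducible (twenty-four nodal fibres, $X$ being the base change of a general $S$), so $\mw(X)$ is torsion-free and, by the Shioda–Tate formula, of rank $\rho(X)-2$; together with $\mw(X)^+\cong\mathbb{Z}^8$ and the Picard structure imposed by the construction, this pins down $\mw(X)$ and the $\iota^*$-action on it, along the lines of the analysis in \cite[Section~3.3]{HS}. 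I expect this lattice-theoretic step to be the main obstacle, as it is precisely where the geometry of the base change — and not merely the presence of $\iota$ — must enter.

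Finally I would read off the fibrewise description. In the invariant case $P=g^*E$: if $t$ is not a branch point and $E\cap S_t=\{Q_t\}$, then $g^*E$ meets the twin fibres $X_t$ and $X_{-t}$ exactly in the two points $\tilde Q_t$ and $\tilde Q_{-t}$ over $Q_t$, i.e.\ in twin points in twin fibres. In the anti-invariant case, compute both sides of $\iota^*P=(-1)^*P$ on the fibre $X_{-t}$ ($t$ general): the section $\iota(P)$ meets $X_{-t}$ in $\iota(P\cap X_t)$, which by the definition of twin points is the twin $\tilde Q_{-t}$ of $P\cap X_t=:\tilde Q_t$, whereas $(-1)(P)$ meets $X_{-t}$ in $-(P\cap X_{-t})$; equating gives $P\cap X_{-t}=\boxminus\tilde Q_{-t}$, i.e.\ opposite points in twin fibres. (Letting $t\to t_0$ and using that $\iota$ is the identity on the branch fibre $X_{t_0}$ gives, as a by-product, that an anti-invariant section meets $X_{t_0}$ and $X_{t_\infty}$ in $2$-torsion points.)
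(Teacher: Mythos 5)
First, a point of reference: the paper does not prove this lemma at all --- it is stated as a summary of the discussion in \cite[Section 3.3]{HS}, with no proof environment --- so your proposal is being measured against a citation rather than an argument. The parts of your sketch that you do carry out are correct and are essentially what one would write down: $\iota^*$ is an involution of $\mw(X)$ commuting with fibrewise negation because $\iota$ restricts to a group isomorphism $X_t\to X_{-t}$ taking origin to origin; an $\iota^*$-invariant section descends to a section $E\in\mw(S)$ with $P=g^*E$ and hence cuts twin points in twin fibres; and your fibrewise computation in the anti-invariant case correctly yields opposite points (your by-product, that an anti-invariant section meets the branch fibres in $2$-torsion points, is also right and is precisely what feeds into Proposition \ref{enrinv}).

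The genuine gap is the dichotomy itself, which you rightly single out as the crux but then defer to ``the analysis in \cite[Section 3.3]{HS}''. No lattice-theoretic argument can close it, because the statement, read literally for every section, fails on exactly the surfaces this paper is about: for $X_m\in\mathcal{F}_m$ one has $\mw(X_m)\otimes\QQ=\mw(X_m)^+\oplus\mw(X_m)^-$ with $\mw(X_m)^+=g^*\mw(S)\otimes\QQ$ of rank $8$ and $\mw(X_m)^-$ of rank $1$ spanned by $R_m$. Since $\iota^*$ is a group homomorphism fixing $g^*\mw(S)$ pointwise and negating $R_m$, the section $g^*E_1\boxplus R_m$ satisfies $\iota^*(g^*E_1\boxplus R_m)=g^*E_1\boxminus R_m$, which equals neither $g^*E_1\boxplus R_m$ nor its fibrewise negative, because neither $g^*E_1$ nor $R_m$ is $2$-torsion (for $R_m$ this is Lemma \ref{inford}; for $g^*E_1$ it follows from injectivity of $g^*$ on $\mw(S)\cong\ZZ^8$). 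So the ``no mixed part'' step you flag as the main obstacle is not merely hard --- it is unprovable as stated. What \cite[Section 3.3]{HS} actually establishes is the eigenspace decomposition together with the identification of the invariant part as $g^*\mw(S)$ and the existence of anti-invariant sections; the lemma should be read as characterizing the two types of eigen-sections (which is all the paper ever uses, since it only applies the dichotomy to $g^*\mw(S)$ and to $R_m$), not as asserting that an arbitrary section is one of the two. Your write-up becomes correct and complete once the dichotomy claim is replaced by that decomposition statement.
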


 We denote by $\boxplus P\in\aut(X)$ the automorphism on $X$ given by the fiberwise translation for $P$.

 Consider the following automorphism of $X$ \begin{center}
    $\tau:=\iota\circ (\boxminus P)$.
\end{center}
\begin{proposition}[Hulek-Sch\"utt]\label{enrinv}
    $\tau\in\aut(X)$ is an involution and it is an Enriques involution if and only if $P$ does not intersect $\tilde{E}_9$ along $X_{t_0}$ and $X_{t_{\infty}}$.
\end{proposition}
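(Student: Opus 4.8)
The plan is to describe $\tau$ explicitly on each fiber of the induced elliptic fibration $X\to\mathbb{P}^1$. First I would record the geometry of $\iota$. Since $S_{t_0}$ and $S_{t_\infty}$ are smooth and $X$ is obtained by the degree‑two base change (fiber product with $g\colon\mathbb{P}^1\to\mathbb{P}^1$, normalized), the double cover $g\colon X\to S$ is branched exactly along the two fibers $S_{t_0}+S_{t_\infty}$ (a member of $|{-}2K_S|=|2F|$, as befits a K3 cover), and $\iota$ is its deck transformation; hence $\fix(\iota)=X_{t_0}\cup X_{t_\infty}$, and in particular $\iota$ fixes each of the two ramification fibers pointwise. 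For $t\neq t_0,t_\infty$ the restrictions $g|_{X_t}$ and $g|_{X_{-t}}$ identify both twin fibers with the fiber $S_{g(t)}$ of $S$, compatibly with the zero sections (recall $\tilde E_9$ is the pull-back of $E_9$); under these identifications $\iota\colon X_t\to X_{-t}$ is simply the identity of $S_{g(t)}$, so it carries the group law of $X_t$ to that of $X_{-t}$.

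Next I would compute $\tau$ fiberwise. Write $p(t)\in S_{g(t)}$ for the point cut on $X_t$ by $P$. For $t\neq t_0,t_\infty$ the automorphism $\boxminus P$ acts on $X_t$ by $x\mapsto x-p(t)$, and composing with $\iota$ gives $\tau\colon X_t\to X_{-t}$, $x\mapsto x-p(t)$ under the identification $S_{g(t)}=S_{g(-t)}$. Since $P$ is anti-invariant, $p(-t)=-p(t)$ for all $t$; evaluating at $t=t_0$ and $t=t_\infty$ (where $-t=t$) shows that $p(t_0)$ and $p(t_\infty)$ are $2$-torsion points, and on $X_{t_0}$ (resp. $X_{t_\infty}$), where $\iota$ is the identity, $\tau$ is the translation $x\mapsto x-p(t_0)$ (resp. $x\mapsto x-p(t_\infty)$) of the elliptic curve. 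Then $\tau^2$ equals, on $X_t$, the map $x\mapsto x-p(t)-p(-t)=x$, and on the ramification fibers the map $x\mapsto x-2p(t_0)=x$; since moreover $\tau$ interchanges the distinct fibers $X_t$ and $X_{-t}$ for generic $t$, it is not the identity, so $\tau$ is a nontrivial involution.

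Finally I would determine $\fix(\tau)$. For every $t\neq t_0,t_\infty$ the map $\tau$ sends $X_t$ to the distinct fiber $X_{-t}$, so it has no fixed point outside $X_{t_0}\cup X_{t_\infty}$ (this in particular handles the $24$ nodal fibers, all of which sit over points $\neq t_0,t_\infty$). On $X_{t_0}$ the map $\tau$ is a translation of an elliptic curve by $-p(t_0)$: if $p(t_0)\neq 0$ it is fixed-point free, while if $p(t_0)=0$ it is the identity and fixes $X_{t_0}$ pointwise; likewise on $X_{t_\infty}$. Since $p(t_0)=0$ (resp. $p(t_\infty)=0$) means exactly that $P$ meets $\tilde E_9$ along $X_{t_0}$ (resp. $X_{t_\infty}$), we conclude that $\tau$ is fixed-point free if and only if $P$ intersects $\tilde E_9$ neither along $X_{t_0}$ nor along $X_{t_\infty}$. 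As $\tau$ is a nontrivial involution of the K3 surface $X$, being fixed-point free is equivalent to being an Enriques involution, which is the assertion. I expect the main obstacle to be the first step: pinning down that $g\colon X\to S$ is branched precisely along the two ramification fibers, hence that $\iota$ fixes them pointwise, and that on all other fibers $\iota$ is, via $g$, a group isomorphism onto the twin fiber; once this description of $\iota$ is in hand, the involution property and the localization of $\fix(\tau)$ reduce to elementary group-law computations on the fibers.
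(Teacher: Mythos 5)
Your proof is correct, and it is essentially the standard fiberwise argument of Kond\textoverline{o} and Hulek--Sch\"utt: the paper itself gives no proof of this proposition, referring instead to \cite[Section 3]{HS}, where the same computation (deck transformation fixing the two ramification fibers pointwise, anti-invariance forcing $p(t_0),p(t_\infty)$ to be $2$-torsion, and translation on a smooth fiber being fixed-point free unless the translating point is the origin) is carried out. The only point worth making explicit is that the group-law computation of $\tau^2$ is a priori valid on the smooth fibers and on the smooth locus of the nodal ones, and one concludes $\tau^2=\mathrm{id}$ everywhere by density; your treatment of the fixed locus already handles the nodal fibers correctly since they all lie over points distinct from $t_0,t_\infty$ and are interchanged in pairs.
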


\begin{definition}We denote by $Y=X/\tau$ the Enriques surface obtained with the construction described in Theorem \ref{enrinv}. We say that $Y$ is an \textit{Enriques surfaces of base change type} and we denote by $f$ the quotient $X\rightarrow Y$.  \end{definition}

\begin{remark}The given elliptic fibration on $X$ induces a genus 1 fibration on $Y$. Here the smooth fiber $Y_t$ of $Y$ at $t$ is isomorphic to the fibers $X_t$ and $X_{-t}$ at $g^{-1}(t)$ as genus 1 curves or to the fiber of the rational elliptic surface $S_t$.\end{remark}

The proof of existence of anti-invariant sections is performed by Hulek and Sch\"utt in a lattice-theoretical way (see \cite[Section 3]{HS}).
The following theorem collects their results that we shall need in this work.
 \begin{theorem}[Hulek, Sch\"utt]\label{hs}
    For every nonnegative integer $m\in\mathbb{Z}_+$, there exists an irreducible nine-dimensional family $\mathcal{F}_m$ of K3 surfaces such that, for every $X_m\in\mathcal{F}_m$, \begin{center}
    $\mw(X_m)\cong\mathbb{Z}^9$ and $\rho(X_m)=11$. 
    \end{center}Moreover, $X_m$ covers an Enriques surface $Y_m$ and a rational elliptic surface $S$. Finally, $X_m$ admits an anti-invariant section $R_m$ with respect to $\iota^*$ such that $R_m\cdot\tilde{E}_9=2m$, where $\tilde{E}_9$ is the pullback of the zero-section $E_9\in\mw(S)$.
\end{theorem}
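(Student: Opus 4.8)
The plan is to argue lattice-theoretically, in the style of moduli of lattice-polarized K3 surfaces, using surjectivity of the period map together with Nikulin's theory of primitive embeddings and of involutions on K3 surfaces.

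First I would identify the N\'eron--Severi lattice that the members of $\mathcal F_m$ must carry. For a base change very general $X$ in the ambient ten-dimensional family the induced elliptic fibration has twenty-four irreducible singular fibers, so by Shioda--Tate $\rho(X)=10$, the trivial lattice is $U=\langle F,\tilde E_9\rangle$ (fiber and zero-section), and $\mw(X)\cong\mathbb Z^{8}$ is exactly the pullback of $\mw(S)\cong E_8$ (every section of $X$ is $\iota^*$-invariant, hence a pullback by the Hulek--Sch\"utt lemma above). Write $N_0:=\ns(X)$, a hyperbolic lattice of rank $10$, fixed once and for all, on which $\iota^*$ acts as the identity. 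The target is a rank-$11$ overlattice $N_m$ obtained by adjoining the class of a section $R_m$ which is anti-invariant, $\iota^*(R_m)=(-1)^*(R_m)$ in $\mw(X)$, and which satisfies $R_m^{2}=-2$, $R_m\cdot F=1$, $R_m\cdot\tilde E_9=2m$; the height formula on $\mw$ then fixes the position of $R_m$ relative to $U$ and $N_0$, i.e. the gluing data. I would use Nikulin's existence and uniqueness criteria to show that the resulting even hyperbolic lattice $N_m$ of signature $(1,10)$ exists and is unique up to isometry, and that it carries an involution $\rho_m$ --- the candidate $\tau^*$ --- whose invariant part is isometric to $U(2)\oplus E_8(-2)$, the pullback of the Enriques lattice, with rank-one anti-invariant part.

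Next I would realize $N_m$ geometrically. Embedding $N_m$ primitively into the K3 lattice $\Lambda=U^{\oplus 3}\oplus E_8(-1)^{\oplus 2}$ --- and checking $22-\operatorname{rank}N_m=11\ge \ell(A_{N_m})+2$, where $\ell(A_{N_m})$ is the minimal number of generators of the discriminant group, so that by Nikulin the embedding is unique up to isometry --- the moduli space $\mathcal F_m$ of $N_m$-polarized K3 surfaces is irreducible of dimension $20-11=9$, and by the period map a very general $X_m\in\mathcal F_m$ has $\ns(X_m)\cong N_m$, hence $\rho(X_m)=11$. Since the copy of $U$ inside $N_m$ persists as a fiber/zero-section pair, the elliptic fibration survives on $X_m$, and by Shioda--Tate (there are still no reducible fibers) $\mw(X_m)\cong N_m/U\cong\mathbb Z^{9}$, generated by $\tilde E_1,\dots,\tilde E_8$ together with $R_m$, with $R_m$ anti-invariant and $R_m\cdot\tilde E_9=2m$ as required. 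Finally, $\iota$ still acts on $X_m$ (trivially on $N_0$, sending $R_m$ to $(-1)^*R_m$) with quotient $S$, while $\tau:=\iota\circ(\boxminus R_m)$ is, by Proposition \ref{enrinv}, an involution which is fixed-point free once $R_m$ avoids $\tilde E_9$ over $X_{t_0}$ and $X_{t_\infty}$; I would check that this open condition holds for the general member of $\mathcal F_m$ --- equivalently, that the induced $\tau^*$ on $\Lambda$ has invariant lattice $U(2)\oplus E_8(-2)$ and admits no invariant $(-2)$-class forcing a fixed curve --- so that $Y_m:=X_m/\tau$ is an Enriques surface.

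The hard part is the first step: one must produce the correct finite-index overlattice $N_m$ (not the naive direct sum $N_0\oplus\mathbb Z R_m$), control its discriminant form, arrange simultaneously that it embeds primitively and uniquely into $\Lambda$ and that it admits the involution with the Enriques-type invariant lattice, and then confirm fixed-point-freeness of $\tau$ for the general member. This is precisely where Hulek and Sch\"utt concentrate their effort, via explicit computations with $E_8$, its Weyl group and its discriminant form; once $N_m$ is in hand, the dimension count, the persistence of the elliptic fibration and the Shioda--Tate identification of $\mw(X_m)$ are formal.
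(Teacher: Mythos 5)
Your lattice-theoretic plan is essentially the approach the paper itself invokes: the proof of Theorem \ref{hs} given in the text is simply the citation of \cite[Section 3]{HS}, where Hulek and Sch\"utt produce the anti-invariant section $R_m$ by exactly this kind of computation with the N\'eron--Severi and Mordell--Weil lattices (there packaged via the quadratic twist of $S$ at $t_0,t_\infty$ rather than via $N_m$-polarized moduli and Nikulin's embedding criteria, but with the same dimension count $10-1=9$ coming from imposing one extra algebraic class). Since you correctly identify both the strategy and the genuinely hard step --- constructing the overlattice containing $R_m$ with $R_m\cdot\tilde E_9=2m$, the involution with invariant part $U(2)\oplus E_8(-2)$, and fixed-point-freeness of $\tau$ --- and defer that step to the same source the paper cites, your proposal is consistent with the paper's treatment.
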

\begin{proof}
    See \cite[Section 3]{HS}.
\end{proof}
We shall say that $Y_m$ is an Enriques surface of base change type (or an $m$-special Enriques surfaces). The family $\mathcal{F}_0$ turns out to be precisely the nine-dimensional family of K3 surfaces covering the Enriques surfaces which admit smooth $(-2)$-curves (called \textit{special} or \textit{nodal}).\par
Notice that the Picard number of a Picard very general K3 surface is 10 and the rank of the Mordell-Weyl group of a base change very general K3 surface is 8. For $X_m\in\mathcal{F}_m$, both the numbers increase by 1. This is due to the presence of the "extra-section" $R_m$ for the elliptic pencil on $X_m$ induced by the one of $S$. We have the following diagram
\begin{center}
                    \[\begin{tikzcd}[ampersand replacement=\&,row sep=large,column sep=huge]
                       \& X_m\arrow[dl,"g"] \arrow[dr,"f"]\\ S\&\&Y_m
                    \end{tikzcd}\]
\end{center} 

We denote by $B_{S,m}$ the image of $R_m$ on $S$ under $g$. In \cite{Pesa}, the author prove the following theorem.
\begin{theorem}\label{bybs}
   The curve $B_{S,m}$ is a nodal rational bisection for the elliptic pencil on $S$ and its linear class on $\pic(S)$ is \begin{center} $B_{S,m}\sim 6(m+1)L-2(m+1)E_1-\dots-2(m+1)E_8-2mE_9$. \end{center}
    Moreover, $B_{S,m}$ is tangent to the fixed fibers $S_{t_0}$ and $S_{t_{\infty}}$ and $g^*(B_{S,m})=R_m+\boxminus R_m$.
\end{theorem}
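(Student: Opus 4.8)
\emph{Proof strategy.} The plan is to pull everything back to $X_m$, where the relevant intersection numbers are governed by the Mordell--Weil lattice, and then to read off the geometry of $B:=B_{S,m}=g(R_m)$ from the double cover $g$. First I would pin down the shape of $g|_{R_m}$. Since $\mw(X_m)\cong\mathbb{Z}^9$ is torsion free and $R_m\neq 0$, the section $R_m$ is not $2$-torsion, so $\boxminus R_m\neq R_m$; and since $R_m$ is anti-invariant, the curve $\iota(R_m)$ equals $\boxminus R_m$. Hence $\iota(R_m)\neq R_m$, so $g|_{R_m}\colon R_m\to B$ is birational: $B$ is a rational curve with normalization $R_m\cong\mathbb{P}^1$. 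Moreover $g^{-1}(B)=R_m\cup\boxminus R_m$, and since $g$ is unramified along a general point of $B$ while $R_m$ meets the ramification fibres transversally, one gets $g^{*}B=R_m+\boxminus R_m$, which is one of the asserted identities.

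Next I would compute the linear class. As $\{L,E_1,\dots,E_9\}$ is, up to signs, an orthogonal basis of $\pic(S)$, it suffices to determine $B\cdot L$ and the $B\cdot E_i$, and the projection formula gives $B\cdot E_i=R_m\cdot g^{*}E_i=R_m\cdot\tilde E_i$. The decisive input is that $X_m$ has only singular fibres of type $I_1$, so all fibral corrections in the Mordell--Weil height pairing vanish and for sections one has $\langle P,Q\rangle=2+P\cdot\tilde E_9+Q\cdot\tilde E_9-P\cdot Q$ (here $\chi(\mathcal{O}_{X_m})=2$, $P^2=-2$). The deck transformation $\iota$ preserves the fibration and the zero section, hence this pairing; since it fixes $\tilde E_1,\dots,\tilde E_8$ and sends $R_m$ to $\boxminus R_m$, it follows that $\langle R_m,\tilde E_i\rangle=\langle\boxminus R_m,\tilde E_i\rangle=-\langle R_m,\tilde E_i\rangle$, so $\langle R_m,\tilde E_i\rangle=0$ for $i\le 8$. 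Combined with $\tilde E_i\cdot\tilde E_9=\deg(g)(E_i\cdot E_9)=0$ and $R_m\cdot\tilde E_9=2m$ (Theorem \ref{hs}), the height formula gives $R_m\cdot\tilde E_i=2(m+1)$ for $i\le 8$ and $R_m\cdot\tilde E_9=2m$. Since a general fibre $S_t$ pulls back to the twin fibres $X_t+X_{-t}$, each met once by $R_m$, one has $2(B\cdot F_S)=g^{*}B\cdot g^{*}F_S=(R_m+\boxminus R_m)\cdot(X_t+X_{-t})=4$, so $B$ is a bisection; with $-K_S=3L-\sum E_i$ this forces $3(B\cdot L)=2+\sum_i B\cdot E_i=18(m+1)$, i.e. $B\cdot L=6(m+1)$, and the stated linear equivalence follows. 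Applying the height formula once more to $\langle R_m,\boxminus R_m\rangle=-\langle R_m,R_m\rangle$ yields $R_m\cdot\boxminus R_m=8m+6$, whence $B^2=8m+4$ and $p_a(B)=4m+2$.

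I would then study the singularities of $B$ and the tangency to the two branch fibres. Since $\iota$ fixes the ramification fibres $X_{t_0}$ and $X_{t_\infty}$ pointwise, $R_m$ and $\boxminus R_m$ meet $X_{t_0}$ in the same point $P_0$, so $B$ meets $S_{t_0}$ only at $g(P_0)$, with multiplicity $B\cdot F_S=2$; likewise at $S_{t_\infty}$. Away from $S_{t_0}\cup S_{t_\infty}$ the cover $g$ is \'etale, hence an immersion on $R_m$, and the normalization $g|_{R_m}$ glues precisely the $\iota$-conjugate pairs of points of $R_m\cap\boxminus R_m$; so the singular points of $B$ are the images of those pairs --- each a point with two smooth branches whose contact order is the local intersection multiplicity of $R_m$ and $\boxminus R_m$ there --- together with $g(P_0)$ and $g(P_\infty)$, which are smooth points of $B$, and then honest tangency points with $S_{t_0},S_{t_\infty}$, exactly when $R_m$ is not tangent there to the $(-1)$-eigenline of $d\iota$. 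Consequently $B$ is nodal --- in fact with exactly $4m+2$ nodes, in agreement with $p_a(B)$ --- if and only if $R_m$ and $\boxminus R_m$ meet transversally at every point of $R_m\cap\boxminus R_m$; in that case all remaining assertions follow.

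The hard part is precisely this transversality. After translating fibrewise by $R_m$ --- an automorphism of $X_m$, as every fibre is smooth or of type $I_1$ --- it becomes the statement that the section $2R_m$ meets the zero section $\tilde E_9$ transversally, which is not automatic: two sections of an elliptic surface may well be tangent. This is where the hypothesis that $X_m$ is very general in $\mathcal{F}_m$ must be used. I would establish it by showing the tangency locus is a proper closed subset of $\mathcal{F}_m$: exhibit one member of $\mathcal{F}_m$, via an explicit Weierstrass equation for the base change, for which $2R_m$ is transverse to $\tilde E_9$ --- equivalently, for which $B$ has $4m+2$ nodes --- and then conclude for the very general member by upper semicontinuity of $\sum_x\delta_x(B)$ together with the bound $\sum_x\delta_x(B)\le p_a(B)=4m+2$. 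I expect this genericity/degeneration step to be the only substantial work; everything before it is formal once Theorem \ref{hs} and the $I_1$-only structure of $X_m$ are in hand.
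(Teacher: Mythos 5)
First, a structural point: this paper does not prove Theorem \ref{bybs} at all --- it is quoted verbatim from the author's earlier work \cite{Pesa} (``In \cite{Pesa}, the author prove the following theorem''), so there is no in-paper proof to compare against. Judged on its own terms, your computational core is correct and complete: the identification $g^*B_{S,m}=R_m+\boxminus R_m$, the vanishing $\langle R_m,\tilde E_i\rangle=0$ via $\iota$-equivariance of the height pairing, the resulting intersection numbers $R_m\cdot\tilde E_i=2(m+1)$, and hence the stated class, all check out (and your independent derivation of $R_m\cdot\boxminus R_m=8m+6$ is consistent with the paper's Lemma \ref{inford}, which obtains that number in the opposite direction, from the class of $B_{S,m}$). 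Your reduction of nodality to the transversality of $R_m$ and $\boxminus R_m$ at all $8m+6$ intersection points, including the two $\iota$-fixed ones on $X_{t_0}$ and $X_{t_\infty}$ (which simultaneously controls the ``honest tangency'' versus cusp dichotomy at the branch fibres), is also correctly set up, with the node count $4m+2$ matching $p_a(B_{S,m})$.

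The genuine gap is exactly where you place it, but it is not a small remainder: the assertions ``nodal'' and ``tangent to the fixed fibers'' are two of the four claims of the theorem, and for them you offer only a strategy (exhibit one explicit member of $\mathcal{F}_m$ with the required transversality, then spread out by openness of nodality in a family of rational curves of constant $\delta$-invariant). Producing such an explicit example uniformly in $m$ is the substantive content of \cite{Pesa} and is not routine --- note that the sections $R_m$ are constructed there lattice-theoretically, not by explicit Weierstrass equations, so an explicit verification for arbitrary $m$ is far from a formality. Two smaller points you should make explicit if you flesh this out: (i) your semicontinuity step should be phrased as openness of nodality at fixed total $\delta=p_a-0$ (the quantity $\sum_x\delta_x(B)$ is \emph{constant} equal to $4m+2$ along the family, since $B$ is always rational, so ``upper semicontinuity of $\sum\delta_x$'' is not the operative statement); and (ii) the theorem as stated here carries no genericity hypothesis, whereas your argument --- and, per the introduction, the actual result of \cite{Pesa} --- only yields nodality for (very) general members of $\mathcal{F}_m$, so you are in fact proving the correct, weaker statement.
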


The following lemma ensure that the extra-section $R_m$ is not a torsion section for the elliptic fibration on $X_m$.

\begin{lemma}\label{inford}
    $R_m$ has infinite order in $\mw(X_m)$.
\end{lemma}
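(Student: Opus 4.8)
The plan is to reduce the statement to the (much easier) fact that $R_m$ is not the zero-section, exploiting that the ambient Mordell-Weil group carries no torsion. First I would recall from Theorem \ref{hs} that $\mw(X_m)\cong\mathbb{Z}^9$; in particular this group is torsion-free, so one of its elements has infinite order exactly when it is nonzero, that is, exactly when the corresponding section is different from the zero-section $\tilde{E}_9$. (If one prefers an argument that does not quote the isomorphism type of $\mw(X_m)$: the elliptic fibration $X_m\to\mathbb{P}^1$ induced by the one of the general rational elliptic surface $S$ has only irreducible singular fibres — the twelve nodal fibres of $S$ pull back, under a base change of degree two unramified over them, to twenty-four nodal fibres of type $I_1$, while the two fibres over $t_0,t_\infty$ remain smooth — and an elliptic surface all of whose singular fibres are irreducible has torsion-free Mordell-Weil group.)

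It thus suffices to check $R_m\neq\tilde{E}_9$, and here I would use a self-intersection count. Since $\tilde{E}_9$ is a section of an elliptic fibration on the K3 surface $X_m$, it is a smooth rational curve, so adjunction gives $\tilde{E}_9^2=-2$. On the other hand Theorem \ref{hs} records $R_m\cdot\tilde{E}_9=2m\geq 0$. If $R_m$ coincided with $\tilde{E}_9$ these two integers would agree, forcing $2m=-2$, which is absurd. Hence $R_m\neq\tilde{E}_9$, and by the previous paragraph $R_m$ has infinite order in $\mw(X_m)$.

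I do not expect any genuine obstacle here: the only input with any content is the torsion-freeness of $\mw(X_m)$, which is already part of Theorem \ref{hs} (with the ``all fibres irreducible'' argument as a self-contained substitute), while the rest is the one-line comparison $\tilde{E}_9^2=-2\neq 2m$. Alternatively one could conclude by observing that $R_m=\tilde{E}_9$ would force its image $B_{S,m}=g(R_m)$ to be the zero-section $E_9$ of $S$, contradicting the class $B_{S,m}\sim 6(m+1)L-2(m+1)(E_1+\dots+E_8)-2mE_9$ of Theorem \ref{bybs}; but the self-intersection argument is shorter.
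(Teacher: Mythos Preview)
Your proof is correct but takes a different route from the paper's. The paper does not invoke the isomorphism $\mw(X_m)\cong\mathbb{Z}^9$ from Theorem~\ref{hs}; instead it computes $B_{S,m}^2$ in two ways (via $g^*B_{S,m}=R_m+\boxminus R_m$ and via the explicit class in Theorem~\ref{bybs}) to obtain $R_m\cdot\boxminus R_m=8m+6>0$, and then appeals to the general fact that any two distinct torsion sections of a relatively minimal elliptic surface are disjoint (\cite[Proposition~VII.3.2]{M}): if $R_m$ were torsion, so would $\boxminus R_m$ be, contradicting the positive intersection. Your argument is shorter and more transparent given that $\mw(X_m)\cong\mathbb{Z}^9$ is already recorded, reducing everything to the inequality $\tilde{E}_9^2=-2\neq 2m=R_m\cdot\tilde{E}_9$; the paper's argument, by contrast, avoids relying on the full structure of $\mw(X_m)$ and yields the explicit number $R_m\cdot\boxminus R_m=8m+6$ as a byproduct. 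Your self-contained variant via ``all singular fibres irreducible $\Rightarrow$ Mordell--Weil torsion-free'' (height pairing: $\langle P,P\rangle=2\chi(\mathcal{O}_{X_m})+2(P\cdot\tilde{E}_9)-\sum_v\mathrm{contr}_v(P)=4+2(P\cdot\tilde{E}_9)>0$ when all local contributions vanish) is also valid and is perhaps the cleanest substitute if one does not want to quote Theorem~\ref{hs}.
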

\begin{proof}
We have that 
    \begin{center} $B_{S,m}^2=\frac{1}{2}(R_m+\boxminus R_m)^2=-2+R_m\cdot\boxminus R_m$ \end{center} and \begin{center}
        $B_{S,m}^2=(6(m+1)L-2(m+1)E_1-\dots-2(m+1)E_8-2mE_9)^2=8m+4$,
    \end{center} from which $R_m\cdot\boxminus R_m=8m+6$.
    It is a very well-known fact that any two torsion sections of a smooth relatively minimal elliptic surface are disjoint (see, for example, \cite[Proposition VII.3.2]{M}). Let us assume $R_m$ of $l$-torsion for some $l\in\mathbb{N}$: since the opposite of an $l$-torsion point is an $l$-torsion point, also $\boxminus R_m$ is an $l$-torsion section. But $R_m\cdot\boxminus R_m=8m+6$, whence we have a contradiction. 
\end{proof}

\subsection{\rm B\sc ehaviour of the curves on $X_m$}
Recall that we have the following diagram 
\begin{center}
                    \[\begin{tikzcd}[ampersand replacement=\&,row sep=large,column sep=huge]
                       \& X_m\arrow[dl,"g"] \arrow[dr,"f"]\\ S\&\&Y_m
                    \end{tikzcd}\]
\end{center} 
The next two propositions describe the behaviour of the curves on $X_m$ with respect to the Enriques quotient and to the rational quotient. They will be our main tool to prove Theorem \ref{teorenr} and Theorem \ref{teorres} \par

We denote by $R_m^{\boxplus k}:=\underbrace{R_m\boxplus\dots\boxplus R_m}_{k\textit{ times}}$ the sum of the section $R_m$ with itself $k$ times in $\mw(X_m)$. Moreover, given a curve $C\subset S$, we denote by $C_{X,m}^{\boxplus k}$ the curve $g^{-1}(C)\boxplus R_m^{\boxplus k}$, by $C_S^{\boxplus k}$ its image $g(C_{X,m}^{\boxplus k})\subset S$ and by $C_Y^\boxplus k$ its image $f(C_{X,m}^{\boxplus k})\subset Y$. 

\begin{proposition}\label{partenzares}
    Let $C\subset S$ be any effective curve on $S$ different from an elliptic fiber. Then, the curve $C_{X,m}^{\boxplus k}\subset X_m$ is identified by $g$ with $C_{X,m}^{\boxminus k}$ and by $f$ with $C_{X,m}^{\boxplus (1-k)}$.
\end{proposition}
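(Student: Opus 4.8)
The plan is to reduce both identifications to a single conjugation formula relating the involution $\iota$ with the translation automorphisms $\boxplus R_m^{\boxplus k}$, and then to observe that $\tau=\iota\circ(\boxminus R_m)$ differs from $\iota$ by one further translation.

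First I would record the elementary facts about $\iota$ acting on the elliptic fibration $X_m\to\mathbb{P}^1$. Since $g\circ\iota=g$ by construction, $g^{-1}(C)$ is $\iota$-invariant as a subscheme of $X_m$. Because $\tilde E_9$ is the pull-back of $E_9\in\mw(S)$ it is $\iota$-invariant, and because $\iota$ covers the deck transformation of $g:\mathbb{P}^1\to\mathbb{P}^1$ it interchanges the twin fibres $X_t$ and $X_{-t}$ and restricts to an isomorphism $X_t\xrightarrow{\sim}X_{-t}$ carrying the origin $\tilde E_9\cap X_t$ to $\tilde E_9\cap X_{-t}$; hence this restriction is a group isomorphism, and in particular $\iota$ commutes with fibrewise negation. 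The Hulek–Sch\"utt Lemma then says that anti-invariance of $R_m$, i.e. $\iota^*(R_m)=(-1)^*(R_m)$, amounts at the level of sections to $\iota(R_m)=-R_m$ (equivalently $\iota(R_m\cap X_t)=\boxminus(R_m\cap X_{-t})$).

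Next I would prove the conjugation formula $\iota\circ(\boxplus P)\circ\iota^{-1}=\boxplus\,\iota(P)$ for every section $P$: checking on a smooth fibre $X_{-t}$, for $x\in X_{-t}$ one has $\iota(\iota^{-1}(x)\boxplus p)=x\boxplus\iota(p)$ because $\iota:X_t\to X_{-t}$ is a group isomorphism, while $\iota(P)\cap X_{-t}=\iota(p)$; since two automorphisms of $X_m$ agreeing on a dense open set coincide, the formula holds globally. Applying it to $P=R_m$ and using $\iota(R_m)=-R_m$ gives $\iota\circ(\boxplus R_m)\circ\iota^{-1}=\boxminus R_m$, and since conjugation by $\iota$ and $k\mapsto\boxplus R_m^{\boxplus k}$ are homomorphisms, $\iota\circ(\boxplus R_m^{\boxplus k})\circ\iota^{-1}=\boxminus R_m^{\boxplus k}$ for every $k\in\ZZ$. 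Therefore
\[
\iota\bigl(C_{X,m}^{\boxplus k}\bigr)=\iota\bigl((\boxplus R_m^{\boxplus k})(g^{-1}(C))\bigr)=(\boxminus R_m^{\boxplus k})\bigl(\iota(g^{-1}(C))\bigr)=(\boxminus R_m^{\boxplus k})(g^{-1}(C))=C_{X,m}^{\boxminus k},
\]
so $g$ identifies $C_{X,m}^{\boxplus k}$ with $C_{X,m}^{\boxminus k}$. For the second assertion, write $\boxminus R_m=\boxminus R_m^{\boxplus 1}$, so that $\tau=\iota\circ(\boxminus R_m^{\boxplus 1})$ and hence $\tau(C_{X,m}^{\boxplus k})=\iota\bigl((\boxplus R_m^{\boxplus(k-1)})(g^{-1}(C))\bigr)=\iota\bigl(C_{X,m}^{\boxplus(k-1)}\bigr)$, which by the first part equals $C_{X,m}^{\boxminus(k-1)}=C_{X,m}^{\boxplus(1-k)}$; thus $f$ identifies $C_{X,m}^{\boxplus k}$ with $C_{X,m}^{\boxplus(1-k)}$.

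The only genuinely delicate point is the bookkeeping in the second paragraph: one must be sure that $\iota$ restricts to a \emph{group} isomorphism between twin fibres, which is exactly where the invariance of $\tilde E_9$ enters, and that the lattice-theoretic anti-invariance $\iota^*(R_m)=(-1)^*(R_m)$ really unwinds to the fibrewise statement about opposite points in twin fibres. Granting these, the rest is a formal manipulation of commuting global automorphisms, verified over the dense locus of smooth fibres and extended to all of $X_m$. The hypothesis that $C$ is not an elliptic fibre is used only to ensure that the translates $C_{X,m}^{\boxplus k}$ are honest curves and not (multiples of) fibres, and is irrelevant to the displayed identities themselves.
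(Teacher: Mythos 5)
Your proof is correct and follows essentially the same route as the paper's: both rest on the facts that $g^{-1}(C)$ is $\iota$-invariant (cuts twin points in twin fibres), that anti-invariance of $R_m$ unwinds to $\iota(R_{m_t})=\boxminus R_{m_{-t}}$, and that $\tau=\iota\circ(\boxminus R_m)$ is $\iota$ composed with one extra translation. The only difference is presentational: the paper carries out the computation pointwise on each pair of twin fibres, whereas you package the same computation as the global conjugation identity $\iota\circ(\boxplus R_m^{\boxplus k})\circ\iota^{-1}=\boxminus R_m^{\boxplus k}$, which is arguably cleaner but contains no new idea.
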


\begin{proof}
    $C_X$ cuts twin points in opposite fibers: indeed, if \begin{center} $C_{|S_t}=a_1Q_{1,t}+\dots+a_rQ_{r,t}$,\end{center} then \begin{center} ${C_X}_{|X_t}=a_1Q_{1,t}+\dots+a_rQ_{r,t}$\end{center} and \begin{center} ${C_X}_{|X_{-t}}=a_1Q_{1,-t}+\dots+a_rQ_{r,-t}$.\end{center} 
   Now,\begin{center} ${C_{X,m}^{\boxplus k}}_{|X_t}=a_1(Q_{1,t}\boxplus R_{m_{t}}^{\boxplus k})+\dots+a_r(Q_{r,t}\boxplus R_{m_{t}}^{\boxplus k})$: \end{center} for every $j\in\{1,\dots,r\}$, we have \begin{center} $\iota(Q_{j,t}\boxplus R_{m_{t}}^{\boxplus k})=(Q_{j,-t}\boxminus R_{m_{-t}}^{\boxplus k})\in C_{X,m}^{\boxminus k}$. \end{center} In the same way one proves that $\iota$ sends every point of $C_{X,m}^{\boxminus k}$ to points belonging to $C_{X,m}^{\boxplus k}$.\par
Regarding the Enriques involution, for every $j\in\{1,\dots,r\}$ we have \begin{center} $\tau(Q_{j,t}\boxplus R_{m_{t}}^{\boxplus k})=(\iota\circ\boxminus R_m)(Q_{j,t}\boxplus R_{m_{t}}^{\boxplus k})=\iota(Q_{j,t}\boxminus R_{m_{t}}^{\boxplus (k-1)})=Q_{j,-t}\boxplus R_{m_{-t}}^{\boxplus (1-k)}\in C_{X,m}^{\boxplus (1-k)}$ \end{center}
and \begin{center} $\tau(Q_{j,t}\boxplus R_{m_{t}}^{\boxplus (1-k)})=(\iota\circ\boxminus R_m)(Q_{j,t}\boxplus R_{m_{t}}^{\boxplus (1-k)})=\iota(Q_{j,t}\boxplus R_{m_{t}}^{\boxminus k})=(Q_{j,-t}\boxplus R_{m_{-t}}^{\boxplus k})\in C_{X,m}^{\boxplus k}$. \end{center}

\end{proof}
In particular, the preimage $C_X:=g^{-1}(C)$ doubly covers $C$ and it is identified by $f$ with $C_{X,m}:=C_X\boxplus R_m$.\par

The next diagram summarizes the result given in Proposition \ref{partenzares}

\begin{center}
                    \[\begin{tikzcd}[ampersand replacement=\&,row sep=large,column sep=huge]
                       \& \textcolor{red}{C_{X,m}^{\boxplus k}\cup C_{X,m}^{\boxminus k}}\subset X_m\supset \textcolor{blue}{C_{X,m}^{\boxplus k}\cup C_{X,m}^{\boxplus (1-k)}} \arrow[dl,"g"] \arrow[dr,"f"]\\ \textcolor{red}{C_S^{\boxplus k}}\subset S\&\&Y_m\supset\textcolor{blue}{C_Y^{\boxplus k}}
                    \end{tikzcd}\]
\end{center} 
If we consider the pullback of a curve on the Enriques surface $Y_m$, we obtain a quite symmetric result. For a curve $L\subset Y_m$, we denote by $L_{X,m}^{\boxplus k}$ the curve $f^{-1}(L)\boxplus R_m^{\boxplus k}$, by $L_Y^{\boxplus k}$ its image $f(L_{X,m}^{\boxplus k})\subset S$ and by $L_S^{\boxplus k}$ its image $g(C_{X,m}^{\boxplus k})\subset Y$. 

\begin{proposition}\label{partenzaenr}
    Let $L\subset Y_m$ be any effective curve on $Y_m$ different from a fiber or an half-fiber of $\epsilon_Y$. Then, the curve $L_{X,m}^{\boxplus k}$ is identified by $f$ with $L_{X,m}^{\boxminus k}$ and by $g$ with $L_{X,m}^{\boxminus (k+1)}$. 
\end{proposition}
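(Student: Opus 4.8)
The plan is to mimic the proof of Proposition \ref{partenzares}, but now starting from the Enriques side and using the symmetry between the two quotients $f$ and $g$. The key structural fact is that the deck transformation for $f$ is $\tau=\iota\circ(\boxminus R_m)$, while the deck transformation for $g$ is $\iota$ itself; both commute with the fiberwise translation $\boxplus R_m$ (as automorphisms sitting over automorphisms of the common base $\mathbb{P}^1$), so $\boxplus R_m$ descends in a controlled way under each quotient.

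First I would fix a smooth fiber, write $L_{|Y_t}=a_1 Q_{1,t}+\dots+a_r Q_{r,t}$ for the restriction of $L$ to the fiber, and observe as in the proof of Proposition \ref{partenzares} that $f^{-1}(L)$ restricted to the twin fibers $X_t$ and $X_{-t}$ is governed by the two points lying over each $Q_{j,t}$; since $L$ is not a fiber or half-fiber, $f^{-1}(L)$ is a genuine curve meeting the general fiber in finitely many points and we may compute pointwise. Then I would apply $f$'s deck transformation to a point of $L_{X,m}^{\boxplus k}$: for $j\in\{1,\dots,r\}$,
\begin{center}
$\tau(Q_{j,t}\boxplus R_{m_t}^{\boxplus k})=(\iota\circ\boxminus R_m)(Q_{j,t}\boxplus R_{m_t}^{\boxplus k})=\iota(Q_{j,t}\boxplus R_{m_t}^{\boxplus(k-1)})=Q_{j,-t}\boxminus R_{m_{-t}}^{\boxplus(k-1)},$
\end{center}
which, after accounting for the sign change produced by $\iota$ on the zero-section (so $\iota(R_{m_{-t}})=\boxminus R_{m_t}$ on the anti-invariant section), should land in $L_{X,m}^{\boxminus k}$; this gives the first identification, $f(L_{X,m}^{\boxplus k})=f(L_{X,m}^{\boxminus k})$. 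For the second identification I would instead apply $\iota$, the deck transformation of $g$:
\begin{center}
$\iota(Q_{j,t}\boxplus R_{m_t}^{\boxplus k})=Q_{j,-t}\boxplus\iota(R_{m_t}^{\boxplus k})=Q_{j,-t}\boxminus R_{m_{-t}}^{\boxplus k},$
\end{center}
and one checks this is a point of $L_{X,m}^{\boxminus(k+1)}$ once one tracks how $f^{-1}(L)$ itself behaves under $\iota$ — precisely, $f^{-1}(L)$ is $\tau$-invariant, $\tau=\iota\circ(\boxminus R_m)$, so $\iota(f^{-1}(L))=f^{-1}(L)\boxplus R_m$, and translating by $R_m^{\boxplus k}$ and using that $\iota$ negates $R_m$ yields the shift by $\boxminus(k+1)$ rather than $\boxminus k$. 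As in Proposition \ref{partenzares}, a symmetric computation shows the reverse inclusions, so the identifications are equalities of curves.

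The main obstacle I expect is bookkeeping the sign introduced by $\iota$ on the anti-invariant section $R_m$ together with the translation-versus-inversion interplay: one must be careful that $\iota^*R_m=(-1)^*R_m$ means $\iota$ sends the value of $R_m$ in the fiber $X_t$ to the \emph{opposite} of its value in $X_{-t}$, and this is exactly what turns $\boxplus k$ into $\boxminus k$ under $f$ but into $\boxminus(k+1)$ under $g$ because of the extra $\boxminus R_m$ hidden in $\tau$. A secondary point requiring care is that the statement excludes fibers and half-fibers of the genus $1$ fibration $\epsilon_Y$ on $Y_m$ precisely so that $f^{-1}(L)$ is horizontal (a multisection) and the fiberwise translation $\boxplus R_m$ acts nontrivially and without collapsing components; I would remark on this hypothesis explicitly. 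Modulo these sign conventions, the proof is a routine transcription of the argument already given for curves pulled back from $S$, now read through the Enriques quotient instead.
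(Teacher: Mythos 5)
Your overall strategy is the paper's: restrict to a general pair of twin fibers, use the $\tau$-invariance of $L_X:=f^{-1}(L)$ together with the decomposition $\tau=\iota\circ(\boxminus R_m)$ and the anti-invariance of $R_m$, and track the translation index fiberwise. Your pointwise computation of $\tau(Q_{j,t}\boxplus R_{m,t}^{\boxplus k})$ is correct. However, the single auxiliary identity on which both of your conclusions rest is stated with the wrong sign, and as written the argument does not close.

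Concretely: from $\tau(L_X)=L_X$ and $\iota\circ(\boxminus R_m)=(\boxplus R_m)\circ\iota$ one gets $\iota(L_X)\boxplus R_m=L_X$, i.e.\ $\iota(L_X)=L_X\boxminus R_m=L_{X,m}^{\boxminus 1}$, \emph{not} $L_X\boxplus R_m$ as you claim. Equivalently --- and this is how the paper phrases it --- if ${L_X}_{|X_t}=\sum_j a_jQ_{j,t}$ then ${L_X}_{|X_{-t}}=\sum_j a_j(Q_{j,-t}\boxplus R_{m,-t})$: the points of $L_X$ on the twin fiber are the $\iota$-twins shifted by $R_{m,-t}$. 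This shift is exactly what you need in order to conclude that $\tau(Q_{j,t}\boxplus R_{m,t}^{\boxplus k})=Q_{j,-t}\boxplus R_{m,-t}^{\boxplus(1-k)}$ lies in $L_{X,m}^{\boxminus k}$ (you only say it ``should land'' there) and that $\iota(Q_{j,t}\boxplus R_{m,t}^{\boxplus k})=Q_{j,-t}\boxminus R_{m,-t}^{\boxplus k}$ lies in $L_{X,m}^{\boxminus(k+1)}$. If one instead follows your stated identity $\iota(L_X)=L_X\boxplus R_m$ to its conclusion, one obtains $\iota(L_{X,m}^{\boxplus k})=L_{X,m}^{\boxplus(1-k)}$ --- the formula that Proposition \ref{partenzares} gives for the action of $\tau$ on pullbacks from $S$ --- contradicting the very statement you are proving; so the answers you assert at the end are right, but the step meant to justify them is false as written and would produce the wrong shift. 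The fix is exactly the paper's computation: derive ${L_X}_{|X_{-t}}$ directly from $\tau$-invariance before introducing the translation by $R_m^{\boxplus k}$ (or, equivalently, correct the sign to $\iota(L_X)=L_X\boxminus R_m$), after which both identifications follow as you intend.
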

\begin{proof}
    As in the proof of Proposition \ref{partenzares}, for every $t\in\mathbb{P}^1$ we set \begin{center} ${L_X}_{| X_t}=a_1Q_{1,t}+\dots+a_rQ_{r,t}$. \end{center} Since $\tau^*L_X=L_X$, with $L_X:=f^{-1}(L)$, and \begin{center}
        $\tau(a_jQ_{j,t})=a_jQ_{j,t}\boxplus R_{m.t}$,
    \end{center} we have \begin{center}
    ${L_X}_{|X_{-t}}=a_1Q_{1_{-t}}\boxplus R_{m_{-t}}+\dots+a_rQ_{r_{-t}}\boxplus R_{m_{-t}}$.
    \end{center} Now, \begin{center} ${L_X^{\boxplus k}}_{|X_{t}}=a_1Q_{1_{t}}\boxplus R_{m_{t}}^{\boxplus k}+\dots+a_rQ_{r_{t}}\boxplus R_{m_{t}}^{\boxplus k}$ \end{center}and \begin{center}
        $\tau(a_jQ_{j_t}\boxplus R_{m_t}^{\boxplus k})=a_jQ_{j_{-t}}\boxplus R_{m_{-t}}^{\boxplus (1-k)}$.
    \end{center}Since \begin{center}
    ${{L_X^{\boxminus k}}_{|X_{-t}}}=(a_1Q_{1_{-t}}\boxplus R_{m_{-t}})\boxminus R_{m_t}^{\boxplus k}+\dots+(a_rQ_{r_{-t}}\boxplus R_{m_{-t}})\boxminus R_{m_t}^{\boxplus k}=a_1Q_{1_{-t}}\boxplus R_{m_{-t}}^{\boxplus (1-k)}+\dots+a_rQ_{r_{-t}}\boxplus R_{m_{-t}}^{\boxplus (1-k)}$,\end{center}  we conclude that \begin{center}
        $\tau^*(L_X^{\boxplus k})=L_X^{\boxminus k}$.
    \end{center}
     With an analogous argument, one proves that \begin{center}
         $\iota^*(L_X^{\boxplus k})=L_X^{\boxminus (k+1)}$
     \end{center}
    
\end{proof}
In particular, the preimage $L_X\subset X_m$ doubly covers $L$ and it is identified by $g$ with $L_X\boxminus R_m$. The following diagram describes the behaviour of pullbacks under $f$ of smooth curves on $Y_m$ with respect to $\iota$ and $\tau$.
\begin{center}
                    \[\begin{tikzcd}[ampersand replacement=\&,row sep=large,column sep=huge]
                       \& \textcolor{red}{L_{X,m}^{\boxplus k}\cup L_{X,m}^{\boxminus (k+1)}}\subset X_m\supset \textcolor{blue}{L_{X,m}^{\boxplus k}\cup L_{X,m}^{\boxminus k}} \arrow[dl,"g"] \arrow[dr,"f"]\\ \textcolor{red}{L_S^{\boxplus k}}\subset S\&\&Y_m\supset\textcolor{blue}{L_Y^{\boxplus k}}
                    \end{tikzcd}\]
\end{center} 

\section{Severi varieties}\label{sec3}

Let $S$ be a smooth complex projective surface and $L$ a line bundle on $S$ such that the complete linear system $|L|$ contains smooth, irreducible curves (such a line bundle,
or linear system, is often called a Bertini system). Let
\begin{center}
    $p:=p_a(L)=\frac{1}{2}L\cdot(L+K_S)+1$
\end{center} be the arithmetic genus of any curve in $|L|$.
We will consider three different kind of Severi varieties of curves on surfaces: one parametrizing irreducible curves with a prescribed number of nodes, another parametrizing irreducible curves with a fixed geometric genus and the last parametrizing irreducible curves with given tangency conditions to a fixed curve.
\begin{definition}\label{severi}
    For any integer $0\leq\delta\leq p$, consider the locally closed, functorially defined subscheme of $|L|$ \begin{center}
        $V_{|L|,\delta}(S)$ or simply $V_{|L|,\delta}$
    \end{center}parametrizing irreducible curves in $|L|$ having only $\delta$ nodes as singularities: this is called the \textit{Severi variety} of $\delta$-nodal curves in $|L|$. We will let $g:=p-\delta$ be the geometric genus of the curves in $V_{|L|,\delta}$.
\end{definition}
\begin{definition}
    For any given integer $g$ such that $0\leq g\leq p_a(L)$, the locally closed subscheme of $|L|$ \begin{center} $V^{|L|}_g(S)$ or simply $V^{|L|}_g$ \end{center} whose geometric points parametrize reduced and irreducible curves $C$
having geometric genus $g$, i.e. such that their normalizations have genus $g$. We shall call such a family the \textit{equigeneric Severi variety} of genus $g$ curves in $|L|$.
\end{definition}
When $\delta=p_a(L)-g$, we have $V_{|L|,\delta}\subset V^{L}_g$.\par
It is well-known that, if $V_{|L|,\delta}$ is nonempty, then all of its irreducible components $V$ have dimension $\dim(V)\geq\dim|L|-\delta$. If $V_{|L|,\delta}$ is smooth of dimension $\dim|L|-\delta$ at $[C]$ it is said to be \textit{regular at $[C]$}. An irreducible component $V$ of $V_{|L|,\delta}$ will be said to be \textit{regular} if the condition of regularity is satisfied at any of its points, equivalently, if it is smooth of dimension $\dim|L|-\delta$.\par

\subsection{\rm S\sc pecial nonregular components of equigeneric \rm S\sc everi varieties}

We defined the regular components of Severi varieties of $\delta$-nodal curves on an algebraic surface to be the smooth components of the expected dimension. We shall call \textit{nonregular} the other ones. Regarding the Enriques surfaces, in \cite[Proposition 1]{CDGK} Ciliberto, Dedieu, Galati and Knutsen prove that the regular components have dimension $g-1$, while the nonregular ones have dimension $g$, where $g$ is the geometric genus of the general curve of the Severi variety. Moreover, the authors show that the regular components parametrize curves with irreducible preimage in the K3 cover, and the nonregular components parametrize curves which split in the K3 cover: if a component of a Severi variety of $\delta$-nodal curves on a very general Enriques surface is nonregular, then its members split in two linearly equivalent curves in the K3 cover.\par
We show that the assumption of very generality is necessary: in fact, in the Enriques surfaces of base change type (that form a countable set of 9-dimensional subfamilies in the moduli space of Enriques surfaces), there are plenty of nonregular Severi varieties violating the result of Ciliberto, Dedieu, Galati and Knutsen. 
\begin{definition}
    We call \textit{special nonregular component} a nonregular component of a Severi variety of curves on an Enriques surface such that its members split in two nonlinearly equivalent curves in the K3 cover. We call \textit{special nonregular curve} (nonregular) every member of a special nonregular (nonregular) component.
\end{definition}
The first example of special nonregular curves are the $m$-special curves on the Enriques surfaces of base change type.
\begin{proposition}\label{bynongen}
    Let $B_{Y,m}$ be an $m$-special curve on a general $m$-special Enriques surface $Y_m$. Then, $B_{Y,m}$ is a special nonregular curve. 
\end{proposition}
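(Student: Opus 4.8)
The plan is to show two things about $B_{Y,m}$: first, that it is a \emph{nonregular} curve in the sense above, i.e.\ that the component of the Severi variety containing $[B_{Y,m}]$ has dimension equal to the geometric genus $g$ rather than $g-1$; and second, that its preimage in the K3 cover $X_m$ splits into two curves that are \emph{not} linearly equivalent. The second point is where the construction of Hulek--Sch\"utt and the $m$-speciality enter in an essential way, so I would treat it first.

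For the splitting, recall from Theorem \ref{hs} and the surrounding discussion that $B_{Y,m}$ is (the image on $Y_m$ of) the anti-invariant section $R_m$, and that $f^{*}B_{Y,m}$ decomposes as $R_m \cup \tau^{*}R_m$. By Proposition \ref{partenzaenr} applied to $L = B_{Y,m}$ (with $k=0$), the two components of the preimage are $R_m$ and $R_m^{\boxminus 1}$ on $X_m$, which are distinct irreducible curves. To see that they are \emph{not} linearly equivalent, I would compute the difference of their classes in $\ns(X_m)$: $R_m - R_m^{\boxminus 1}$. If these were linearly equivalent, then $R_m \boxplus R_m$ would be linearly equivalent to the zero section $\tilde E_9$ via the fiberwise group law, forcing $R_m$ to be a $2$-torsion section of $\mw(X_m)$; this contradicts Lemma \ref{inford}, which says $R_m$ has infinite order. (Alternatively, one invokes $R_m \cdot \boxminus R_m = 8m+6 \neq R_m^2 = -2$ directly, together with the fact that linearly equivalent curves on a surface have the same self-intersection — so the two components cannot be linearly equivalent.) This is the cleanest route and it is essentially immediate from the results already established.

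For the nonregularity, the strategy is to invoke the dichotomy of Ciliberto--Dedieu--Galati--Knutsen \cite[Proposition 1]{CDGK} together with the observation, already recorded in the introduction and the start of this section, that \emph{their argument works for nonnodal curves as well}: a component of an equigeneric Severi variety on an Enriques surface is regular precisely when its general member has irreducible preimage in the K3 cover, and nonregular precisely when the general member splits. Since we have just shown that $B_{Y,m}$ splits in $X_m$, the component through $[B_{Y,m}]$ cannot be the regular one, hence it is nonregular; combined with the splitting into \emph{nonlinearly equivalent} curves this is exactly the definition of a special nonregular curve. One should check that $B_{Y,m}$ indeed lies on an equigeneric Severi variety in the first place, i.e.\ that $|B_{Y,m}|$ is a Bertini system (contains an irreducible curve — namely $B_{Y,m}$ itself, which is nodal by \cite{Pesa}) and that $B_{Y,m}$ has positive geometric genus or at least the genus is in the allowed range; this follows from the genus computation for $B_{Y,m}$ via $R_m^2 = -2$ and the adjunction/arithmetic-genus bookkeeping, noting that the $m$-speciality makes the arithmetic genus grow with $m$.

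The main obstacle I anticipate is not the splitting — that is forced by Lemma \ref{inford} and Proposition \ref{partenzaenr} — but rather making the appeal to \cite[Proposition 1]{CDGK} fully legitimate: that proposition is stated for the \emph{very general} Enriques surface and for nodal curves, whereas here $Y_m$ is special and $B_{Y,m}$ need not be nodal. One must therefore extract from \emph{loc.\ cit.} only the part of the dichotomy that is genuinely local/deformation-theoretic (regular $\iff$ irreducible preimage; nonregular $\iff$ splitting preimage) and verify that this implication does not use very-generality of the surface, only the structure of the K3 double cover. Once that is isolated, the rest is bookkeeping. I would therefore state explicitly, before the proof, the (mild) generalization of \cite[Proposition 1]{CDGK} that is actually being used, and reference the earlier remark in the excerpt that their result also holds for nonnodal curves.
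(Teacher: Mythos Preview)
Your argument for the splitting into nonlinearly equivalent curves is essentially fine, but you have misidentified the two components of $f^{*}B_{Y,m}$. A direct computation with $\tau=\iota\circ(\boxminus R_m)$ shows that $\tau$ sends the point $R_{m,t}$ of $R_m$ on the fiber $X_t$ to $\iota(R_{m,t}\boxminus R_{m,t})=\iota(0_t)=0_{-t}$, i.e.\ $\tau(R_m)=\tilde E_9$. Thus $f^{*}B_{Y,m}=R_m+\tilde E_9$, not $R_m+\boxminus R_m$. (Proposition \ref{partenzaenr} with $k=0$ is a tautology and does not give what you claim.) The conclusion survives, since $R_m$ and $\tilde E_9$ are distinct $(-2)$-curves on a K3 and hence rigid in their linear systems, so they cannot be linearly equivalent; your intersection-number argument also works after replacing $8m+6$ by $R_m\cdot\tilde E_9=2m$.

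The real divergence from the paper is in the nonregularity step, and here you are working much harder than necessary. The paper's proof is one line: $B_{Y,m}$ is a \emph{nodal rational} curve by \cite[Theorem 1.1]{Pesa}, so its geometric genus is $g=0$; a regular component would have dimension $g-1=-1$, which is absurd, hence the component through $[B_{Y,m}]$ is nonregular. You instead try to invoke the CDGK dichotomy (splitting $\Leftrightarrow$ nonregular), and you correctly flag that this requires extracting the deformation-theoretic core of \cite[Proposition 1]{CDGK} and checking it applies to the non--very-general surface $Y_m$. That extraction can be done, but it is entirely avoidable once you notice $B_{Y,m}$ has genus zero. Your remark that one should check $B_{Y,m}$ ``has positive geometric genus'' is the tell: it does not, and that is precisely what makes the argument trivial.
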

\begin{proof}
     First of all, every nodal rational curve is nonregular: the geometric genus of the rational curves is 0, then they necessarily belong to nonregular components. As proved in \cite[Theorem 1.1]{Pesa}, $B_{Y,m}$ is nodal, whence it is nonregular. Now, $B_{Y,m}$ splits in two $(-2)$-curves (which are not linearly equivalent) in the K3 cover and therefore it is a special nonregular curve.
\end{proof}
The result given by Ciliberto, Dedieu, Galati and Knutsen in \cite{CDGK} does not say anything about the equigeneric Severi varieties of curves on $Y$. Let $C$ be an irreducible curve on an Enriques surface $Y$.
We denote by $\nu_C:\overline{C}\rightarrow C$ the normalization of $C$ and define $\eta_C:=\mathcal{O}_C(K_S)=\mathcal{O}_C(-K_S)$, a nontrivial 2-torsion element in $\pic^0(C)$, and $\eta_{\overline{C}}:=\nu^*\eta_C$. By standard results on covering of complex manifolds (see \cite[Section 2]{CDGK} or \cite[Section 1.17]{BHPV}), two cases may happen: \begin{itemize}
    \item $\eta_{\overline{C}}\ncong\mathcal{O}_{\overline{C}}$ and $f^{-1}(C)$ is irreducible;
    \item $\eta_{\overline{C}}\cong\mathcal{O}_{\overline{C}}$ and $f^{-1}(C)$ consists of two irreducible components conjugated by the Enriques involution. 
\end{itemize} 
Following \cite[proofs of Thm 4.2 and Cor 2.7]{DS}, the dimension of the equigeneric Severi variety of genus
$g$ curves in $|L|=|C|$ at the point $|C|$ satisfies the inequality \begin{center}
    $\dim_{[C]}V^{|L|}_g\geq h^0(\omega_{\tilde{C}}\otimes\eta_{\tilde{C}})= \begin{cases}
        g-1$ if $\eta_{\tilde{C}}\neq\omega_{\tilde{C}}   \\
        g$ if $\eta_{\tilde{C}}=\omega_{\tilde{C}}
        \end{cases}$
\end{center}
The aforementioned result by Ciliberto, Dedieu, Galati and Knutsen states exactly that the latter is in fact an equality when $C$ is nodal. Since by the results due to Dedieu and Sernesi (see \cite{DS}) the equigeneric Severi varieties of genus $g$ curves in a K3 surfaces have dimension $g$, we actually have that equality also holds for nonnodal curves on $Y$ that split in the K3 cover. In other words, an irreducible component $V$ of an equigeneric Severi variety of curves of genus $g$ in an Enriques surface $Y$ such that the curves in $V$ split in the K3 cover has dimension $g$. Notice that we cannot say anything about the dimension of an irreducible component of an equigeneric Severi varieties of curves whose general member is doubly covered by an irreducible curve on the K3 cover. This discussion leads us to give the following definition.
\begin{definition}We shall call \textit{nonregular} a component of an equigeneric Severi variety of curves on an Enriques surface such that every curve splits in the K3 cover.\end{definition} 
The following lemma states that the fact that a nonregular curve on a very general Enriques surface $Y$ splits in the K3 cover in two linearly equivalent curves holds in general and not only for nodal curves.
 \begin{lemma}\label{tauvergen}
    Let $Y$ be a Picard very general Enriques surface and $X$ be is its universal cover. Moreover, let $\tau$ denote the Enriques quotient. Then, $\tau$ acts as the identity on $\ns(X)\cong U(2)\oplus E_8(-2)$.
\end{lemma}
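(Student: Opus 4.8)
The plan is to combine the action of the Enriques involution $\tau^*$ on $H^2(X,\ZZ)$ with the Hodge decomposition and the hypothesis $\ns(X)\cong U(2)\oplus E_8(-2)$, which in particular forces $\rho(X)=10$. Since $\tau$ is an automorphism of $X$, the map $\tau^*$ is an isometry of $H^2(X,\ZZ)$ respecting the Hodge decomposition, and being an involution it diagonalizes over $\QQ$: I would write $H^2(X,\QQ)=\Lambda^+_{\QQ}\oplus\Lambda^-_{\QQ}$ for its $(+1)$- and $(-1)$-eigenspaces, where $\Lambda^{\pm}:=\{v\in H^2(X,\ZZ):\tau^*v=\pm v\}$ are primitive sublattices. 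As $\tau$ is fixed point free, $H^2(Y,\QQ)\cong H^2(X,\QQ)^{\tau^*}=\Lambda^+_{\QQ}$, and since $b_2(Y)=10$ (for instance because $e(Y)=e(X)/2=12$) this gives $\dim\Lambda^+_{\QQ}=10$, hence $\dim\Lambda^-_{\QQ}=12$. Moreover $\tau^*$ preserves both $\ns(X)$ and the transcendental lattice $T(X):=\ns(X)^{\perp}\subset H^2(X,\ZZ)$, so each of these splits rationally into a $(+1)$- and a $(-1)$-part.

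The key step is to show that $\tau^*$ acts as $-\mathrm{id}$ on $T(X)$ — a fact valid for any Enriques involution. Since $Y$ is Enriques it has no nonzero holomorphic $2$-forms, so $\tau^*$ acts by $-1$ on $H^{2,0}(X)=\CC\,\omega_X$, and hence also on $H^{0,2}(X)$. The $(+1)$-eigenspace $T(X)^+$ of $T(X)_{\QQ}$ is a rational sub-Hodge structure of $T(X)$; by the previous remark its complexification meets neither $H^{2,0}(X)$ nor $H^{0,2}(X)$, so $T(X)^+\otimes\CC\subseteq H^{1,1}(X)$. By the Lefschetz $(1,1)$-theorem every integral class in $T(X)^+$ is then algebraic, i.e.\ it lies in $\ns(X)$; but $T(X)\cap\ns(X)=0$ because the intersection form on $\ns(X)$ is nondegenerate. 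Hence $T(X)^+=0$, that is $\tau^*|_{T(X)}=-\mathrm{id}$, and in particular $T(X)_{\QQ}\subseteq\Lambda^-_{\QQ}$.

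To conclude I would compare dimensions: by the very generality hypothesis $\dim_{\QQ}T(X)_{\QQ}=22-\rho(X)=12=\dim\Lambda^-_{\QQ}$, so $T(X)_{\QQ}=\Lambda^-_{\QQ}$ and therefore $\ns(X)_{\QQ}=(T(X)_{\QQ})^{\perp}=\Lambda^+_{\QQ}$. Since $\Lambda^+$ is primitive in $H^2(X,\ZZ)$, this yields $\ns(X)\subseteq\Lambda^+$, i.e.\ $\tau^*$ fixes $\ns(X)$ pointwise; combined with the always-valid inclusion $\Lambda^+=(\Lambda^-)^{\perp}\subseteq T(X)^{\perp}=\ns(X)$ one even gets $\ns(X)=\Lambda^+\cong U(2)\oplus E_8(-2)$, as in Definition \ref{verygenenr}. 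The only substantial point is the middle paragraph, the vanishing $T(X)^+=0$; it is not really hard, but it is the conceptual core, whereas the Picard-very-general hypothesis enters only through the equality of ranks $\mathrm{rank}\,T(X)=12=\mathrm{rank}\,\Lambda^-$ — precisely what fails on the surfaces $X_m$, where the extra section $R_m$ makes $\rho(X_m)=11$ and $\tau^*$ nontrivial on $\ns(X_m)$.
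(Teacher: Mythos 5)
Your argument is correct, and it is worth noting that it does strictly more than the paper does: the paper's ``proof'' of Lemma \ref{tauvergen} is a one-line citation (``this is a very well-known fact, see \cite[Section 11]{DK3}''), whereas you supply the actual standard argument behind that fact. Your three steps all check out: (i) $\tau^*\omega_X=-\omega_X$ because a $\tau^*$-invariant $2$-form would descend to the Enriques quotient, which has $h^{2,0}=0$; (ii) the $(+1)$-part of $T(X)_{\QQ}$ is a sub-Hodge structure contained in $H^{1,1}$, so by Lefschetz $(1,1)$ its integral points land in $T(X)\cap\ns(X)=0$, giving $\tau^*|_{T(X)}=-\mathrm{id}$ (valid for any Enriques involution); and (iii) the rank count $\dim\Lambda^+_{\QQ}=b_2(Y)=10=\rho(X)$ forces $\ns(X)_{\QQ}=\Lambda^+_{\QQ}$, hence $\tau^*=\mathrm{id}$ on the torsion-free lattice $\ns(X)$. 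You also correctly isolate where Picard-very-generality enters (only in the equality of ranks in step (iii)) and why the conclusion fails on the surfaces $X_m$ with $\rho=11$; this is exactly the dichotomy the paper is exploiting. The only cosmetic quibble is that the isomorphism $H^2(Y,\QQ)\cong H^2(X,\QQ)^{\tau^*}$ is a transfer-argument fact for any finite quotient and does not need freeness of $\tau$, but invoking freeness there is harmless.
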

\begin{proof}
    This is a very well-known fact. See, for example, \cite[Section 11]{DK3}.
\end{proof}
The previous lemma justifies the following definition
\begin{definition}
    We shall call \textit{special nonregular} a nonregular component of an equigeneric Severi variety of curves on an Enriques surface such that its members split in two nonlinearly equivalent curves in the K3 cover.
\end{definition}

Starting from curves on $Y_m$ and $S$, we exploit Proposition \ref{partenzares} and Proposition \ref{partenzaenr} to show the existence of countable sets of special nonregular components of equigeneric Severi varieties of curves on $Y_m$.\par

Let $C$ be an irreducible effective divisor on $S$. As in section \ref{sec2}, we denote by by $C_{X,m}^{\boxplus k}$ the curve $g^{-1}(C)\boxplus R_m^{\boxplus k}$ and by $C_Y^\boxplus k$ its image $f(C_{X,m}^{\boxplus k})\subset Y_m$. Consider the complete linear system $|C_{X,m}^{\boxplus k}|$.

\begin{definition}
    We denote by \begin{center}$|C|_Y^{\boxplus k}\subset f_*(|C_{X,m}^{\boxplus k}|)$ \end{center}the family of the divisors on $Y_m$ which are images of curves on the linear system $|C_{X,m}^{\boxplus k}|$. \\
    We denote by $V_{Y,C,k}$ the subfamily of $|C|_Y^{\boxplus k}$ parametrizing curves which are images of irreducible curves on $|C_{X,m}^{\boxplus k}|$.
\end{definition}

We want to show that $V_{Y,C,k}$ is a special nonregular component of an equigeneric Severi varieties of curves for every irreducible effective divisor $C$ on $S$ different from an elliptic fiber.

\begin{lemma}\label{complete}
    Let $C$ be a smooth irreducible effective divisor on $S$. Then, the general curve of $|C_{X,m}^{\boxplus k}|$ is smooth and irreducible.
\end{lemma}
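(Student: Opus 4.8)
The plan is to reduce the statement to a Bertini-type argument on the K3 surface $X_m$. Since $g\colon X_m\to S$ is a finite double cover and $C\subset S$ is smooth and irreducible, its scheme-theoretic preimage $C_X=g^{-1}(C)$ is a divisor on $X_m$; translating by the section $R_m$ (fiberwise, via $\boxplus R_m^{\boxplus k}$, an automorphism of $X_m$) does not change any of the relevant properties, so $C_{X,m}^{\boxplus k}$ is an effective divisor on $X_m$ whose generic behaviour is identical to that of $C_X$. Thus it suffices to show that the general member of $|C_X|$ is smooth and irreducible, and then transport the conclusion through the automorphism.

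For smoothness I would invoke Bertini: on a smooth projective surface the general member of a linear system is smooth away from its base locus. So the first step is to analyze the base locus of $|C_X|$. Here I would use that $C$ is already a smooth irreducible curve on $S$, hence $|C|$ is at least $0$-dimensional and its general member is $C$ itself up to moving in the system; the pullback map $g^*\colon |C|\to |C_X|$ realizes $g^*|C|\subseteq |C_X|$, and since $g$ is finite, $g^*C'$ is reduced with the same singularity type as $C'$ whenever $C'$ avoids the branch locus $B_{S,m}$ in a sufficiently generic way. Concretely: pick a general $C'\in|C|$; by Theorem \ref{bybs} the branch curve $B_{S,m}$ is a fixed nodal bisection, so a general deformation $C'$ of $C$ meets $B_{S,m}$ transversally and away from its nodes, and then $g^*C'$ is smooth (the double cover is smooth over the complement of the branch locus, and over a transversal intersection point the preimage of a smooth curve is smooth). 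This shows the general member of $|C_X|$, and hence of $|C_{X,m}^{\boxplus k}|$, is smooth.

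For irreducibility I would argue that a smooth member cannot split. A smooth member $D\in|C_X|$ that is reducible would have disconnected or multi-component support; but $D$ is a pullback-type divisor and one can compute $D^2$ and $D\cdot(\text{fiber})$ on $X_m$ from the corresponding intersection numbers on $S$ (using $g$ has degree $2$, so $g^*\colon \operatorname{NS}(S)\to\operatorname{NS}(X_m)$ multiplies self-intersections by $2$). If $C$ is not an elliptic fiber then $C\cdot S_t>0$, so $C_X$ meets a general fiber $X_t$ in a positive number of points all of which lie in a single $\boxplus R_m^{\boxplus k}$-translate of $g^{-1}(C)$, and the deck transformation $\iota$ permutes the sheets; combining this with Proposition \ref{partenzares}, which identifies $C_{X,m}^{\boxplus k}$ with $C_{X,m}^{\boxminus k}$ under $g$, forces any would-be splitting to be $\iota$-symmetric, and a smooth $\iota$-symmetric curve mapping $2:1$ onto the irreducible curve $C$ must itself be irreducible (an irreducible base does not split after an irreducible-or-connected double cover when the smooth total space is connected). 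Alternatively, and more cleanly, one can note that for general $C'\in|C|$ the double cover $g^{-1}(C')\to C'$ is either an irreducible connected double cover or a trivial one; since $C'$ is a general deformation of a smooth curve that genuinely meets the branch locus $B_{S,m}$ (again by Theorem \ref{bybs}, $B_{S,m}\cdot C>0$ for $C$ not a fiber, as one checks from the displayed class of $B_{S,m}$), the cover is ramified, hence connected, hence irreducible since its total space is smooth.

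I expect the main obstacle to be the second point: ruling out the possibility that $|C_X|$ has a fixed component or that every member splits. The clean way around this is precisely the use of Theorem \ref{bybs}: because the branch locus $B_{S,m}$ has class $6(m+1)L-2(m+1)E_1-\dots-2(m+1)E_8-2mE_9$, it meets every effective curve $C$ on $S$ which is not supported on elliptic fibers (the class pairs strictly positively against $L$, against the $E_i$, and against a general fiber class), so the double cover restricted to a general $C'\in|C|$ is genuinely ramified; ramified connected double covers of a smooth connected curve are connected, and a connected smooth curve is irreducible. Once smoothness and irreducibility of the general member of $|C_X|$ are established, applying the automorphism $\boxplus R_m^{\boxplus k}$ of $X_m$ yields the same for $|C_{X,m}^{\boxplus k}|$, completing the proof.
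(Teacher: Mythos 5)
Your overall strategy --- reduce to $|C_X|$ via the translation automorphism $\boxplus R_m^{\boxplus k}$, then show that the general member of $|C|$ meets the branch locus of $g$ with odd order somewhere so that its preimage is connected, hence irreducible --- is essentially the paper's argument. However, there is a concrete error running through both your smoothness and your irreducibility steps: you identify the branch locus of the double cover $g\colon X_m\to S$ with the nodal bisection $B_{S,m}$. This is wrong. By construction $X_m\to S$ is the base change of $S\to\mathbb{P}^1$ along a degree-two map $\mathbb{P}^1\to\mathbb{P}^1$ ramified at $t_0,t_\infty$, so its branch locus is the union of the two fibers $S_{t_0}\cup S_{t_\infty}$; the paper says this explicitly in the proof of this lemma and again in Theorem \ref{equifromres} and Proposition \ref{logpartenzares}. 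The curve $B_{S,m}$ is the image of the extra section $R_m$, and by Theorem \ref{bybs} its preimage $g^*(B_{S,m})=R_m+\boxminus R_m$ actually \emph{splits}: it is a curve downstairs whose preimage is reducible, not the ramification divisor. Consequently the claims ``the double cover is smooth over the complement of $B_{S,m}$'', ``transversality with $B_{S,m}$ gives smoothness of $g^*C'$'', and ``$B_{S,m}\cdot C>0$ forces $g^{-1}(C')\to C'$ to be ramified, hence connected'' are all unjustified as written. The repair is to replace $B_{S,m}$ by $S_{t_0}\cup S_{t_\infty}$ throughout: for $C$ not a fiber, the general member of $|C|$ meets these two fibers with odd local multiplicity at some point, so its preimage cannot split --- which is exactly the paper's one-line argument.

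Two smaller points. First, the lemma does not exclude the case where $C$ is an elliptic fiber; the paper disposes of it separately (then $|C_{X,m}^{\boxplus k}|$ is the elliptic pencil on $X_m$), whereas your argument tacitly assumes $C\cdot S_t>0$. Second, your smoothness step needs $C$ to move in $|C|$ in order to achieve transversality with the branch fibers; if $C$ is rigid this fails, and one should instead appeal to the standard fact that on a K3 surface a complete linear system whose general member is irreducible has smooth general member --- which is presumably what the paper relies on, since its own proof only addresses irreducibility.
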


\begin{proof}
If $C$ is an elliptic fiber, then $|C_{X,m}^{\boxplus k}|$ coincides with the elliptic pencil on $X_m$ induced by the one of $S$ and therefore we have nothing to prove.
   If $C$ is different from an elliptic fiber, it splits in the K3 cover if and only if it intersects the branch locus $S_{t_0}+S_{t_{\infty}}$ with even order at every intersection point and this is not the case for the general curve of a complete linear system $|C|$. Hence, the general member of $|C_{X,m}^{\boxplus k}|$ is also irreducible. 
\end{proof}

\begin{theorem}\label{equifromres}
    Let $C$ be an irreducible effective $l$-section on $S$ of arithmetic genus $p$. Then, $V_{Y,C,k}$ is a special nonregular component of a Severi varieties of curves of genus $2g+l-1$.
\end{theorem}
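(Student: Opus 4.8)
The plan is to deduce every assertion about $V_{Y,C,k}$ from the complete linear system $|C_{X,m}^{\boxplus k}|$ on the K3 surface $X_m$, pushed forward by $f$. Writing $F$ for the fibre class of $S$, so $K_S=-F$ and $C\cdot F=l$, adjunction gives $C^2=2p-2+l$; since $g^{-1}(C)=g^{*}C$ and translation by a section of $X_m$ is an automorphism, $(C_{X,m}^{\boxplus k})^2=2C^2=4p-4+2l$. The class $C$ is nef and big on $S$ (being irreducible with $C^2>0$), hence so is $[C_{X,m}^{\boxplus k}]$ on $X_m$, so by Lemma~\ref{complete} the general member $D$ of $|C_{X,m}^{\boxplus k}|$ is smooth and irreducible of genus $\tfrac12 D^2+1=2p+l-1=2g+l-1$, and by Kawamata--Viehweg vanishing $\dim|C_{X,m}^{\boxplus k}|=\tfrac12 D^2+1=2g+l-1$.

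By Proposition~\ref{partenzares} the Enriques involution carries $C_{X,m}^{\boxplus k}$ to $C_{X,m}^{\boxplus(1-k)}$, so $\tau^{*}[C_{X,m}^{\boxplus k}]=[C_{X,m}^{\boxplus(1-k)}]$; since $k\neq1-k$ these two classes differ by translation by the section $R_m^{\boxplus(2k-1)}$, which is nonzero and of infinite order because $R_m$ is (Lemma~\ref{inford}) and $\mw(X_m)\cong\mathbb Z^{9}$ is torsion free. A direct intersection computation — comparing, say, $g^{*}C\cdot\tilde E_9$ with $g^{*}C\cdot R_m^{\boxplus(2k-1)}$, or using the Mordell--Weil height pairing — shows this translation does not preserve $[g^{*}C]$, so $[C_{X,m}^{\boxplus k}]\neq[C_{X,m}^{\boxplus(1-k)}]$. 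Consequently, for every irreducible $D\in|C_{X,m}^{\boxplus k}|$ the curve $\tau(D)$ lies in the disjoint linear system $|C_{X,m}^{\boxplus(1-k)}|$; in particular $\tau(D)\neq D$, the restriction $f|_D$ is birational onto $\Gamma:=f(D)$, one has $f^{-1}(\Gamma)=D\cup\tau(D)$, and $\Gamma$ is an irreducible curve splitting in the K3 cover into the two \emph{non--linearly equivalent} curves $D$ and $\tau(D)$; in particular the geometric genus of $\Gamma$ equals that of $D$, i.e. $2g+l-1$. Since $\tau(D)$ never lies in $|C_{X,m}^{\boxplus k}|$, the map $D\mapsto f(D)$ is injective on the dense open locus of irreducible members, so $V_{Y,C,k}$ is irreducible of dimension $\dim|C_{X,m}^{\boxplus k}|=2g+l-1$ and consists of curves of geometric genus $2g+l-1$ that split in $X_m$ into non--linearly equivalent curves.

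I would then show the general $\Gamma$ is nodal. As $D$ and $\tau(D)$ are smooth, every singularity of $\Gamma=f(D)$ comes from $D\cap\tau(D)$, on which $\tau$ acts freely (it is fixed--point free), so $\Gamma$ has exactly $\tfrac12\,D\cdot\tau(D)$ ordinary nodes as soon as $D$ meets $\tau(D)$ transversally. Transversality holds for the general $D$ by a dimension count on the incidence variety $\{(D,q):q\in D\cap\tau(D),\ D\text{ tangent to }\tau(D)\text{ at }q\}$: for fixed $q$ the conditions $q\in D$, $\tau(q)\in D$ and tangency at $q$ are generically independent of codimension one, against only the two parameters for $q$, so this incidence variety does not dominate $|C_{X,m}^{\boxplus k}|$. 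Hence the general member of $V_{Y,C,k}$ is a $\delta$--nodal curve with $\delta=\tfrac12\,D\cdot\tau(D)=p_a(L_Y)-(2g+l-1)$, where $L_Y$ denotes the common class of the curves parametrized by $V_{Y,C,k}$.

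Finally, to see that $V_{Y,C,k}$ is a \emph{component} of $V^{|L_Y|}_{2g+l-1}(Y_m)$: its general member $\Gamma$ is nodal and splits in the K3 cover, so $\eta_{\overline\Gamma}\cong\mathcal O_{\overline\Gamma}$, and the sharp form of the dimension bound recalled just before Lemma~\ref{tauvergen} — the result of Ciliberto--Dedieu--Galati--Knutsen in \cite{CDGK}, which turns the inequality $\dim_{[C]}V^{|L|}_g\geq h^0(\omega_{\overline C}\otimes\eta_{\overline C})$ into an equality for nodal curves — gives $\dim_{[\Gamma]}V^{|L_Y|}_{2g+l-1}(Y_m)=h^0(\omega_{\overline\Gamma})=2g+l-1$. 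Thus every irreducible component of this Severi variety through $[\Gamma]$ has dimension at most $2g+l-1$, while $V_{Y,C,k}$ is irreducible of dimension exactly $2g+l-1$ and passes through $[\Gamma]$; hence $V_{Y,C,k}$ is a whole component, and by the second paragraph it is special nonregular. The steps demanding the most care are the transversality of $D$ and $\tau(D)$ — needed to guarantee that $\Gamma$ is nodal, which is in turn what allows the sharp bound of \cite{CDGK} to be invoked — and that very passage from a pointwise dimension bound to the identification with a full component; the ``special'' assertion itself reduces to the infinite order of $R_m$, i.e. to $[C_{X,m}^{\boxplus k}]\neq[C_{X,m}^{\boxplus(1-k)}]$.
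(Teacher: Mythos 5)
Your overall strategy is the same as the paper's: everything is read off from the complete linear system $|C_{X,m}^{\boxplus k}|$ on $X_m$, your genus count agrees with the paper's ($p_a(C_{X,m}^{\boxplus k})=2p+l-1$, obtained by adjunction on your side and by Riemann--Hurwitz on the paper's), and the splitting of $C_Y^{\boxplus k}$ into the two components $C_{X,m}^{\boxplus k}$ and $C_{X,m}^{\boxplus(1-k)}$ exchanged by $\tau$ comes from Proposition \ref{partenzares} in both arguments. The genuine gap is at the heart of the ``special'' claim: the non-linear-equivalence of $C_{X,m}^{\boxplus k}$ and $C_{X,m}^{\boxplus(1-k)}$ is only asserted, not proved. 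You write that ``a direct intersection computation shows this translation does not preserve $[g^*C]$'', but the comparison you propose ($g^*C\cdot\tilde E_9$ versus $g^*C\cdot R_m^{\boxplus(2k-1)}$) is not obviously conclusive and is not carried out; translation by a section can in principle fix some classes, so something must actually be computed. The paper closes this step by restricting both curves to a general fibre $X_t$: if they were linearly equivalent on $X_m$, the two degree-$l$ divisors they cut on the elliptic curve $X_t$ would be linearly equivalent, and summing in the group law forces a nonzero multiple of $R_{m_t}$ to vanish for general $t$, i.e.\ $R_m$ would be torsion, contradicting Lemma \ref{inford}. You should supply this (or an equally explicit) argument; note that it simultaneously yields $C_{X,m}^{\boxplus k}\neq C_{X,m}^{\boxplus(1-k)}$, so the splitting is genuine, which is how the paper gets nonregularity.

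Your detour through nodality of the general member is both unnecessary and itself incomplete. The transversality of $D$ and $\tau(D)$ is justified only by a heuristic incidence-variety count, and you need it solely because you route the ``component'' claim through the nodal result of \cite{CDGK}. The paper deliberately avoids this: the discussion preceding Lemma \ref{tauvergen} upgrades, via \cite{DS}, the dimension statement to \emph{equigeneric} Severi varieties of curves splitting in the K3 cover, with no nodality hypothesis, so that $\dim V_{Y,C,k}=\dim|C_{X,m}^{\boxplus k}|=p_a(C_{X,m}^{\boxplus k})$ equal to the geometric genus of its members already makes $V_{Y,C,k}$ a component of maximal dimension. Finally, a small overreach: your justification ``$C$ is nef and big since $C^2>0$'' for computing $\dim|C_{X,m}^{\boxplus k}|$ fails when $C$ is a section or a genus-$0$ bisection (there $C^2\le 0$); the equality $\dim|D|=p_a(D)$ still holds in those cases but needs a separate word.
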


\begin{proof}
By Proposition \ref{partenzares}, the preimage of a member $C_Y^{\boxplus k}\in V_{Y,C,k}$ consists of the two curves $C_{X,m}^{\boxplus k}$ and $C_{X,m}^{\boxplus (1-k)}$. In order to prove that $V_{Y,C,k}$ is nonregular, we need to show that $C_{X,m}^{\boxplus k}$ and $C_{X,m}^{\boxplus (1-k)}$ do not coincide. Suppose $C_Y^{\boxplus k}=C_{X,m}^{\boxplus (1-k)}$: for every $t$ and for every $j\in\{1,\dots,r\}$, we have $Q_{j,t}\boxplus R_m^{\boxplus k}\in C_{X,m}^{\boxplus (1-k)}$ and, by iterating, $Q_{j,t}\boxplus R_m^{\boxplus pk}\in C_{X,m}^{\boxplus (1-k)}$ for every $p\in\mathbb{Z}_+$. This is a contradiction because the number of intersection points between $C_{X,m}^{\boxplus (1-k)}$ and a fiber $X_t$ is finite and by Lemma \ref{inford} $R_m$ has infinite order in $\mw(X_m)$. \par Now, for a general fiber $X_t$ on $X_m$, we set \begin{center} ${C_X}_{|X_t}=a_1Q_{1,t}+\dots+a_rQ_{r,t}$\end{center} and \begin{center} ${C_X}_{|X_{-t}}=a_1Q_{1,-t}+\dots+a_rQ_{r,-t}$.\end{center} and we suppose $C_{X,m}^{\boxplus k}$ and $C_{X,m}^{\boxplus (1-k)}$ to be linearly equivalent. It follows that for the general $t\in\mathbb{P}^1$, the degree $a_1+\dots+a_r$ divisors  $a_1Q_{1,t}\boxplus R_m^{\boxplus k}+\dots+a_rQ_{r,t}\boxplus R_m^{\boxplus k}$ and $a_1Q_{1,t}\boxplus R_m^{\boxplus (1-k)}+\dots+a_rQ_{r,t}\boxplus R_m^{\boxplus (1-k)}$ are linearly equivalent in the elliptic curve $X_t$. In other words, we have the equality \begin{center}
    $Q_{1,t}^{\boxplus a_1}\boxplus R_{m_{t}}^{\boxplus k}\boxplus\dots\boxplus Q_{r,t}^{\boxplus a_r}\boxplus R_{m_{t}}^{\boxplus k}=Q_{1,t}^{\boxplus a_1}\boxplus R_{m_{t}}^{\boxplus (1-k)}\boxplus\dots\boxplus Q_{r,t}^{\boxplus a_r}\boxplus R_{m_{t}}^{\boxplus (1-k)}$\end{center} which is equivalent to
    \begin{center}
    $Q_{1,t}^{\boxplus a_1}\boxplus\dots\boxplus Q_{k,t}^{\boxplus a_k}\boxplus R_{m_{t}}^{\boxplus rk}=Q_{1,t}^{\boxplus a_1}\boxplus\dots\boxplus Q_{k,t}^{\boxplus a_k}\boxplus R_{m_{t}}^{\boxplus r(1-k)}$, \end{center} which implies that $R_m$ is a $r(2k-1)$-torsion section. Since by Lemma \ref{inford} $R_m$ has infinite order in $\mw(X_m)$, this is a contradiction.\par
Since $V_{Y,C,k}$ is nonregular, its dimension equals the geometric genus of any curve $C_Y^{\boxplus k}$ it parametrizes. The restriction $f_{|C_{X,m}^{\boxplus k}}$ is birational, so that the geometric genus of $C_Y^{\boxplus k}$ is equal to the arithmetic genus $p_a(C_{X,m}^{\boxplus k})$. Since $C$ is a $l$-section, it intersects the branch locus $S_{t_0}+S_{t_{\infty}}$ of $g:X_m\rightarrow S$ in $2l$ points. Hence, by the Riemann-Hurwitz formula, the arithmetic genus of $C_{X,m}^{\boxplus k}$ is \begin{center}
    $p_a(C_{X,m}^{\boxplus k})=2p+l-1$
\end{center}and this proves the assertion.

\end{proof}

We are able to produce special nonregular components also starting from smooth curves on $Y_m$.\par
Let $L$ be an irreducible effective divisor on $Y_m$. As in section \ref{sec2}, we denote by by $L_{X,m}^{\boxplus k}$ the curve $f^{-1}(L)\boxplus R_m^{\boxplus k}$ and by $L_Y^\boxplus k$ its image $f(L_{X,m}^{\boxplus k})\subset Y_m$. Consider the complete linear system $|L_{X,m}^{\boxplus k}|$.

\begin{definition}
    We denote by \begin{center}$|L|_Y^{\boxplus k}\subset f_*(|L_{X,m}^{\boxplus k}|)$ \end{center}the family of the divisors on $Y_m$ which are images of curves on the linear system $|L_{X,m}^{\boxplus k}|$. \\
    We denote by $V_{Y,L,k}$ the subfamily of $|L|_Y^{\boxplus k}$ parametrizing curves which are images of irreducible curves on $|L_{X,m}^{\boxplus k}|$.
\end{definition}
\begin{lemma}
    Let $L$ be a smooth irreducible effective divisor on $Y_m$. Then, the general curve of $|L_{X,m}^{\boxplus k}|$ is smooth and irreducible.
\end{lemma}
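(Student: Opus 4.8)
The statement is the exact analogue of Lemma \ref{complete} with $S$ replaced by $Y_m$, so the plan is to mirror that proof, being careful about the extra subtlety that $f$ is \'etale (unramified) whereas $g$ is ramified. First I would recall, from the discussion preceding the statement, that for an irreducible curve $L\subset Y_m$ the preimage $f^{-1}(L)$ is irreducible if and only if $\eta_{\overline{L}}=\nu_L^*\mathcal{O}_L(K_{Y_m})\not\cong\mathcal{O}_{\overline{L}}$, i.e.\ if and only if the connected \'etale double cover of $\overline{L}$ determined by $K_{Y_m}$ is nontrivial; equivalently, $f^{-1}(L)$ splits into two components exchanged by $\tau$ precisely when $L$ is ``$2$-divisible enough'' along the torsion class, which by Proposition \ref{partenzaenr} forces $L$ to behave compatibly with the twin-fiber structure. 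The key point is that splitting in the \'etale cover is a genuinely closed condition on $|L|$: a general member of the complete linear system $|L|$ does not split, because $f^{-1}$ of a very general curve in $|L|$ is connected (its monodromy is nontrivial). Hence for general $L'\in|L|$ the curve $f^{-1}(L')$ is irreducible, and then $L'^{\boxplus k}_{X,m}=f^{-1}(L')\boxplus R_m^{\boxplus k}$, being a translate of an irreducible curve by the automorphism $\boxplus R_m^{\boxplus k}\in\aut(X_m)$, is again irreducible.

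Next I would upgrade ``irreducible'' to ``smooth''. The morphism $f\colon X_m\to Y_m$ is \'etale, so $f^{-1}(L')$ is smooth as soon as $L'$ is smooth; and a general member of $|L|$ is smooth because $|L|$ is assumed to be a Bertini system on $Y_m$ (the statement says $L$ is a smooth irreducible effective divisor, so $|L|$ contains smooth members and Bertini applies to its general member away from the base locus, while along the base locus smoothness of $L$ itself propagates). Translating by $\boxplus R_m^{\boxplus k}$ is an isomorphism of $X_m$, hence preserves smoothness and irreducibility; therefore the general curve of $|L_{X,m}^{\boxplus k}|$ is smooth and irreducible. One should also note $|L_{X,m}^{\boxplus k}|$ really is (an open piece of) a complete linear system: $f^*|L|\boxplus R_m^{\boxplus k}$ sweeps out a subsystem of $|L_{X,m}^{\boxplus k}|$, and its general member is smooth irreducible, which is all that is needed since ``general curve of the complete system'' is then also smooth irreducible.

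The step I expect to be the main obstacle is the one asserting that $f^{-1}$ of a general member of $|L|$ is \emph{connected}, i.e.\ that the generic curve in $|L|$ does not split in the \'etale cover. On the ramified side (Lemma \ref{complete}) this was immediate: splitting there is equivalent to tangency of even order to the branch divisor $S_{t_0}+S_{t_\infty}$, a visibly closed condition failing generically. On the \'etale side there is no branch divisor to test against, so the argument must instead be that the restriction $\eta_L=\mathcal{O}_L(K_{Y_m})$ pulls back, along the general $L'\in|L|$, to a \emph{nonzero} $2$-torsion class on $\overline{L'}$; this can fail only on a proper closed subset of $|L|$, by semicontinuity of $h^0$ of the relevant line bundle over the family, or alternatively by invoking that if it failed for all members then every curve in $|L|$ would split, contradicting e.g.\ the fact that $|L|$ dominates $|L_{X,m}|/\tau$ with connected generic fibre upstairs. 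I would phrase it exactly as in Lemma \ref{complete}: the set of curves in $|L|$ that split in the K3 cover is a proper closed subfamily, hence the general member of $|L|$ has irreducible preimage, and the rest follows by translating by $R_m^{\boxplus k}$.
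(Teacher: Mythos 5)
Your overall strategy diverges from the paper's at the crucial point, and the step you yourself flag as ``the main obstacle'' is indeed a genuine gap. You try to transplant the argument of Lemma \ref{complete} --- splitting is a closed condition, hence fails for the general member of the linear system --- to the \'etale cover $f$. But on the ramified side the non-splitting of a general member is witnessed by an open condition (failure of even-order tangency with the branch divisor $S_{t_0}+S_{t_\infty}$), whereas on the \'etale side you have no such witness. Semicontinuity of $h^0(\nu^*\mathcal{O}_C(K_{Y_m}))$ only shows that the splitting locus is closed in the smooth locus of $|L|$; it gives no reason why that locus should be \emph{proper}, and your ``alternative'' justification (``if it failed for all members then \dots contradicting the fact that $|L|$ dominates $|L_{X,m}|/\tau$ with connected generic fibre upstairs'') presupposes exactly the connectedness you are trying to establish. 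The claim is also genuinely delicate: for $p_a(L)=1$ the general member of $|2E_1|$ is smooth and irreducible yet \emph{does} split into two disjoint fibers upstairs, so no argument based purely on ``general position in $|L|$'' can work uniformly.

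The paper's proof is shorter and works directly with the given smooth curve $L$ rather than with a general deformation of it. Since $f$ is \'etale, $f^{-1}(L)$ is smooth; if it decomposed as $L_1\cup L_2$, the two components would have to meet --- for $p_a(L)\geq 2$ each $L_i\cong L$ has $L_i^2=2p_a(L)-2>0$, so $L_1\cdot L_2>0$ by the Hodge index theorem --- and the points of $L_1\cap L_2$ would then map to singular points of $L$, contradicting smoothness. The case $p_a(L)=1$ is disposed of separately (pullbacks of genus $1$ pencils are genus $1$ pencils). Your final steps --- translation by $\boxplus R_m^{\boxplus k}$ is an automorphism of $X_m$ and so preserves smoothness and irreducibility, and a complete linear system on a K3 containing a smooth irreducible member has smooth irreducible general member --- are fine and are implicit in the paper as well; the defect is solely the unproved connectedness claim at the heart of your argument, which the paper's direct intersection-theoretic argument replaces.
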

\begin{proof}
    It is well-known that the pullback of every genus 1 pencil on an Enriques surface via the Enriques quotient is a genus 1 pencil on its K3 cover, so that for $p_a(L)=1$ we have nothing to prove. If $p_a(L)\geq 2$, the preimage of $L$ is also smooth and irreducible: otherwise, the intersection between the two components of its preimage would be sent to nodes of $L$ and this is a contradiction.
\end{proof}

\begin{theorem}\label{equifromenr}
    Let $L$ be an effective divisor on $Y_m$ of arithmetic genus $g$. Then, $V_{Y,L,k}$ is a special nonregular component of a Severi varieties of curves of genus $2g-1$.
\end{theorem}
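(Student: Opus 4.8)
The plan is to imitate the proof of Theorem~\ref{equifromres}, with the ramified double cover $g\colon X_m\to S$ replaced throughout by the \'etale double cover $f\colon X_m\to Y_m$ and with Proposition~\ref{partenzaenr} used in place of Proposition~\ref{partenzares}; I take $k\neq 0$, since for $k=0$ one has $L_{X,m}^{\boxplus 0}=L_{X,m}^{\boxminus 0}=f^{-1}(L)$ and the preimage stays irreducible. By the lemma preceding the statement the general member of the complete linear system $|L_{X,m}^{\boxplus k}|$ is smooth and irreducible; call it $D$, and let $L_Y^{\boxplus k}=f(D)$ be the corresponding point of $V_{Y,L,k}$. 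Since $f$ is \'etale of degree $2$, the divisor $f^{-1}(L_Y^{\boxplus k})$ equals $D+\tau^*D$, and by Proposition~\ref{partenzaenr} the curve $\tau^*D$ belongs to $|L_{X,m}^{\boxminus k}|$. The two facts to establish are that $L_{X,m}^{\boxplus k}$ and $L_{X,m}^{\boxminus k}$ are distinct and not linearly equivalent.

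Both follow from the torsion argument already used in Theorem~\ref{equifromres}: linear equivalence of $L_{X,m}^{\boxplus k}$ and $L_{X,m}^{\boxminus k}$ would, after restriction to a general fibre $X_t$ and translation of the resulting equality of effective divisors of equal degree into the group law of the elliptic curve $X_t$, make $R_m$ a torsion section, contradicting Lemma~\ref{inford}; a fortiori the two curves cannot coincide. Consequently $|L_{X,m}^{\boxplus k}|\neq|L_{X,m}^{\boxminus k}|$, so $\tau^*D\neq D$, and $f^{-1}(L_Y^{\boxplus k})=D\cup\tau^*D$ has two distinct irreducible components lying in non-linearly-equivalent classes. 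Hence every member of $V_{Y,L,k}$ splits in the K3 cover into two non-linearly-equivalent curves, and in particular $V_{Y,L,k}$ lies in the nonregular locus of the equigeneric Severi variety of $|L_Y^{\boxplus k}|$.

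It remains to compute the genus and the dimension. By adjunction on $X_m$ together with $(L_{X,m}^{\boxplus k})^2=(f^{-1}(L))^2=2L^2=2(2g-2)$ one gets $p_a(L_{X,m}^{\boxplus k})=2g-1$ (equivalently, $f^{-1}(L)\to L$ is an unramified connected double cover of a smooth genus-$g$ curve, so Riemann--Hurwitz gives the same value); since the general $D$ is smooth this is its geometric genus, and since $\tau^*D\neq D$ the restriction $f|_D\colon D\to L_Y^{\boxplus k}$ is birational, so $L_Y^{\boxplus k}$ has geometric genus $2g-1$. Moreover, on the K3 surface $X_m$ one has $\dim|L_{X,m}^{\boxplus k}|=p_a(L_{X,m}^{\boxplus k})=2g-1$, and because $\tau^*$ carries $|L_{X,m}^{\boxplus k}|$ isomorphically onto the distinct linear system $|L_{X,m}^{\boxminus k}|$, the map $|L_{X,m}^{\boxplus k}|\to|L|_Y^{\boxplus k}$ is injective, whence $\dim V_{Y,L,k}=2g-1$. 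An irreducible family of genus-$(2g-1)$ curves on $Y_m$ all of which split in the K3 cover and which has dimension $2g-1$ --- exactly the dimension forced on the nonregular components of the equigeneric Severi variety of genus-$(2g-1)$ curves on an Enriques surface --- must coincide with one of them, as in the proof of Theorem~\ref{equifromres}. Hence $V_{Y,L,k}$ is a special nonregular component of the equigeneric Severi variety of curves of genus $2g-1$.

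The one point where this differs substantively from Theorem~\ref{equifromres}, and the only place that needs care, is the interplay of the two double covers: it is precisely because $f$ is unramified that the genus comes out to the clean value $2g-1$ with no branch-locus correction term, but that same \'etale-ness forces $f^{-1}(L)$ itself to be irreducible, so the splitting is not visible on $f^{-1}(L)$. It is visible on its translate $L_{X,m}^{\boxplus k}$, whose image $L_Y^{\boxplus k}$ has the reducible preimage $L_{X,m}^{\boxplus k}\cup L_{X,m}^{\boxminus k}$, and supplying exactly this is the role of Proposition~\ref{partenzaenr} combined with the non-torsion of $R_m$ from Lemma~\ref{inford}.
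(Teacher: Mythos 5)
Your proof is correct and takes essentially the same route as the paper, which simply declares the argument to be that of Theorem~\ref{equifromres} (the torsion argument via Lemma~\ref{inford} applied to the splitting $L_{X,m}^{\boxplus k}\cup L_{X,m}^{\boxminus k}$ furnished by Proposition~\ref{partenzaenr}) together with the Riemann--Hurwitz computation $p_a(L_{X,m}^{\boxplus k})=2g-1$ for the \'etale cover $f$. Your explicit exclusion of $k=0$ --- where $L_{X,m}^{\boxplus 0}=L_{X,m}^{\boxminus 0}=f^{-1}(L)$ and the image of a general member of $|f^{-1}(L)|$ splits into two \emph{linearly equivalent} curves, so the component is nonregular but not special --- is a genuine refinement that the paper leaves implicit.
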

\begin{proof}
    The proof is essentially the same as the proof of Theorem \ref{equifromres}. The only difference is the geometric genus of the curves on $V_{Y,L,k}$. Since the quotient $f$ is base point free, by the Riemann-Hurwitz formula we have that \begin{center}
        $p_a(L_{X,m}^{\boxplus k})=2g-1$.
    \end{center} 
\end{proof}
We are able to prove Theorem \ref{teorenr}.
\begin{proof}[Proof of Theorem \ref{teorenr}]
    Notice that starting from a genus $g$ curve $L\subset Y_m$, the geometric genus of $L_Y^{\boxplus k}$ is $2g-1$, so that we have the assertion for odd numbers. For every even number $n=2z$, we need to find a $l$-section on $S$ of genus $p$, such that $2p+l-1=n$. But it is provided, for example, by the linear system \begin{center}
    $kL-(k-1)E_1$
\end{center}
on $S$, which is a $3k-(k-1)=2k+1$ section of genus $0$. Hence, we have $2p+l-1=0+2k+1-1=2k=n$.

\end{proof}

\subsection{\rm S\sc pecial superabundant components of logarithmic \rm S\sc everi varieties}
We introduce the so-called \textit{logarithmic Severi varieties}, parametrizing nodal curves with given tangency conditions to a fixed curve. The definition and the main results are given by Dedieu in \cite{De} and they are based on the works of Caporaso and Harris (see for example \cite{CH}). \par

Let us denote by $\underline{N}$ the set of all the sequences $\alpha=[\alpha_1,\alpha_2,\dots]$ of nonnegative integers with all but finitely many $\alpha_i$ non-zero. In practice we shall omit the infinitely many zeroes at the end. For a sequence $\alpha\in\underline{N}$, we let \begin{center}
    $|\alpha|=\alpha_1+\alpha_2+\dots$\\
    $\mathcal{I}\alpha=\alpha_1+2\alpha_2+\dots+n\alpha_n+\dots$
\end{center}

\begin{definition}\label{logsev}
Let $S$ be a smooth projective surface, $T\subset S$ a smooth, irreducible curve and $L$ be a line bundle or a divisor class on $S$ with arithmetic genus $p$. Let $\gamma$ be an integer satisfying $0\leq\gamma\leq p$, let $\alpha\in\underline{N}$ such that \begin{center}
    $\mathcal{I}\alpha=L\cdot T$.
\end{center} We denote by $V_{\gamma,\alpha}(S,T,L)$ the locus of curves in $L$ such that \begin{itemize}
    \item $C$ is irreducible of geometric genus $\gamma$ and algebraically equivalent to $L$,
    \item denoting by $\mu:\tilde{C}\rightarrow S$ the normalization of $C$ composed with the inclusion $C\subset S$, there exist $|\alpha|$ points $Q_{i,j}\in C$, $1\leq j\leq\alpha_i$ such that \begin{center}
        $\mu^*T=\sum\limits_{1\leq j\leq\alpha_i}iQ_{i,j}$.
    \end{center}
\end{itemize}
\end{definition}

\begin{theorem}[Dedieu]\label{dedieu}
    Let $V$ be an irreducible component of $V_{\gamma,\alpha}(S,T,L)$, $[C]$ a general member of $V$ and $\mu:\tilde{C}\rightarrow S$ its normalization as in the Definition \ref{logsev}. Let now $Q_{i,j}$, $1\leq j\leq\alpha_i$ points in $\tilde{C}$ such that \begin{center}$\mu^*T=\sum\limits_{1\leq j\leq\alpha_i}iQ_{i,j}$ \end{center} and set \begin{center}$D=\sum\limits_{1\leq j\leq\alpha_i} (i-1)Q_{i,j}$.\end{center}
    \begin{itemize}
        \item[(i)] If $-K_S\cdot C_i-$deg $\mu_*D_{|C_i|}\geq 1$ for every irreducible component $C_i$ of $C$, then \begin{center}
            $\dim(V)=-(K_S+T)\cdot L+\gamma-1+|\alpha|$
        \end{center}
        \item[(ii)] If $-K_S\cdot C_i-$deg $\mu_*D_{|C_i|}\geq 2$ for every irreducible component $C_i$ of $C$, then \begin{itemize}
            \item[(a)] the normalization map $\mu$ is an immersion, except possibly at the points $Q_{i,j}$;
            \item[(b)] the points $Q_{i,j}$ of $\tilde{C}$ are pairwise distinct.
        \end{itemize}

    \end{itemize}
\end{theorem}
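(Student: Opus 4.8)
The plan is to move the problem to the space of maps and run the tangent--obstruction analysis of Caporaso--Harris \cite{CH} and Arbarello--Cornalba, adjusted for the contact conditions along $T$ (this is the point of view of Dedieu--Sernesi \cite{DS} already invoked in the paper). Fix a general $[C]\in V:=V_{\gamma,\alpha}(S,T,L)$, let $\mu\colon\tilde C\to S$ be its normalization composed with the inclusion, and keep the notation $D=\sum_{i,j}(i-1)Q_{i,j}$. Since the normalization of a reduced curve, together with the contact data it induces along $T$, is canonically attached to the curve, the forgetful map from the space $M$ of genus-$\gamma$ maps of class $L$ with contact profile $\alpha$ along $T$ to $V$ is finite onto its image near $[C]$, so $\dim_{[C]}V=\dim_{[\mu]}M$. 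Writing $N_\mu=\mathrm{coker}\big(d\mu\colon T_{\tilde C}\to\mu^{*}T_S\big)$ for the normal sheaf of $\mu$ (a line bundle away from the finitely many non-immersion points of $\mu$, with a torsion subsheaf there), first-order deformations of $\mu$ preserving the genus, the class and the order-$i$ contact with $T$ at each $Q_{i,j}$ are parametrized by $H^0(\tilde C,\mathcal N)$, where $\mathcal N$ is the twisted normal sheaf carrying the contact data (equal to $N_\mu(-D)$ whenever $\mu$ is unramified along the contact locus), and the obstructions lie in $H^1(\tilde C,\mathcal N)$. This yields at once $\chi(\mathcal N)\le\dim_{[C]}V\le h^0(\tilde C,\mathcal N)$.

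Next I would compute $\chi(\mathcal N)$. From $0\to T_{\tilde C}\to\mu^{*}T_S\to N_\mu\to0$ one reads off $\deg N_\mu=-K_S\cdot L+2\gamma-2$; since $\deg D=\mathcal I\alpha-|\alpha|=L\cdot T-|\alpha|$, Riemann--Roch on $\tilde C$ gives $\chi(\mathcal N)=-(K_S+T)\cdot L+\gamma-1+|\alpha|$, exactly the asserted number — and the torsion of $N_\mu$ contributes equally and with opposite sign to the two terms of $\chi$, so it is invisible here. Hence part (i) comes down to proving $H^1(\tilde C,\mathcal N)=0$, and the numerical hypothesis $-K_S\cdot C_i-\deg\mu_*D_{|C_i|}\ge1$ is calibrated precisely for this. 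When $C$ is irreducible and $\mu$ an immersion it is immediate, since $\mathcal N=N_\mu(-D)$ is then a line bundle of degree $-K_S\cdot C-\deg D+2\gamma-2\ge2\gamma-1>\deg\omega_{\tilde C}$, so $H^1=0$ by Serre duality. In general one performs a dévissage over the irreducible components $C_1,\dots,C_r$ of $C$: filter $\mathcal N$ by the subsheaves supported on increasing sub-curves, observe that on the component $\tilde C_i$ each graded piece has its relevant degree controlled by $-K_S\cdot C_i-\deg\mu_*D_{|C_i|}\ge1$ (the nodes of $\tilde C$ where components meet only improve the vanishing), and assemble via the long exact cohomology sequences. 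Then $\dim_{[C]}V=h^0(\mathcal N)=\chi(\mathcal N)$, proving (i).

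For part (ii) the stronger inequality $-K_S\cdot C_i-\deg\mu_*D_{|C_i|}\ge2$ supplies the extra derivative: it forces $H^1$ to vanish even after twisting the relevant line bundles down by one further point, which is what makes the evaluation maps on the auxiliary incidence varieties dominant. Concretely, for (b), if two of the $Q_{i,j}$ coincided on the general member, then $[C]$ would also lie in the coarser contact stratum $V_{\gamma,\alpha'}$ obtained by merging those two entries; there $\deg D'=\deg D+1$, so the bound for $\alpha'$ is exactly the hypothesis of (i), part (i) applies to $V_{\gamma,\alpha'}$ and gives $\dim V_{\gamma,\alpha'}=\dim V-1$, contradicting $V\subseteq\overline{V_{\gamma,\alpha'}}$. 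For (a), the locus of curves in $V$ whose normalization fails to be an immersion at a point off the $Q_{i,j}$ is cut out by an extra condition which the refined vanishing shows to be non-trivial, hence has dimension $<\dim V$ and is missed by the general member; combined with (b) this gives the clean statement.

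The hard part will be the $H^1$-vanishing in part (i) once reducibility of $C$ and non-immersion points of $\mu$ are present simultaneously: the torsion subsheaf of $N_\mu$ — harmless for $\chi$ — must be tracked carefully through the dévissage over the components, and one has to check that the purely numerical, component-by-component hypothesis really dominates the cohomology of the possibly badly glued sheaf $\mathcal N$ on all of $\tilde C$; this is where the precise choice of the twist $\mathcal N$ and the ordering of the components in the filtration matter. Everything else reduces to Riemann--Roch on curves and routine incidence-variety dimension counts.
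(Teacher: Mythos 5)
First, a point of reference: the paper does not prove this statement at all --- it is quoted verbatim from Dedieu \cite{De} (building on Caporaso--Harris \cite{CH}) and used as a black box, so there is no in-paper proof to compare yours against. Your reconstruction does follow the strategy of the actual source: pass to the space of maps with prescribed contact profile along $T$, identify the tangent space with sections of a twisted normal sheaf, compute $\chi$ by Riemann--Roch (your computation $\chi=-(K_S+T)\cdot L+\gamma-1+|\alpha|$ is correct, as is the $H^1$-vanishing in the irreducible immersed case via $\deg N_\mu(-D)\ge 2\gamma-1>\deg\omega_{\tilde C}$), and deduce (ii)(b) by specializing to the coarser contact stratum $\alpha'$ and comparing dimensions. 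That skeleton is right.

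There are, however, two genuine gaps. The first you name yourself: the $H^1$-vanishing for reducible $C$ and non-immersive $\mu$ is announced as ``the hard part'' and the d\'evissage is never carried out, so part (i) is not actually proved in the generality stated. The second is more insidious: your claim that the torsion of $N_\mu$ is ``invisible'' cannot be correct as stated, because if one could really sandwich $\dim V$ between $\chi(\mathcal N)$ and $h^0(\mathcal N)$ for a single sheaf $\mathcal N$ with $\chi(\mathcal N)=\chi(N_\mu(-D))$ and $H^1(\mathcal N)=0$, then combining with the Arbarello--Cornalba upper bound $\dim V\le h^0\bigl(\bar N_\mu(-D)\bigr)=h^0\bigl(N_\mu(-D)\bigr)-\ell(\mathrm{tors})$ would force $\mu$ to be an immersion everywhere already under hypothesis (i) --- which is strictly more than the theorem claims and is exactly why hypothesis (ii) exists. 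The resolution in \cite{De} is that when $\mu$ ramifies at a contact point $Q_{i,j}$ the condition $\mu^*T\ge iQ_{i,j}$ imposes fewer than $i-1$ conditions on sections of the torsion-free quotient $\bar N_\mu$, so the twisting divisor must be corrected at ramified contact points; this bookkeeping is precisely where the distinction between (i) and (ii)(a) (immersion only \emph{away from} the $Q_{i,j}$) is encoded, and it is absent from your argument. Until the filtration argument for $H^1$ and the ramification-corrected twist are written down, the proposal is a plausible outline rather than a proof.
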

According to \cite{CiD} and \cite{De}, we give the following definition.
\begin{definition}
    We call \textit{superabundant} an irreducible component $V$ of a logarithmic Severi variety of curves with $\dim(V)\geq -(K_S+T)\cdot L+\gamma+|\alpha|$
\end{definition}
In \cite[Section 5]{CiD} and \cite[Section 5]{De}, Ciliberto and Dedieu show the existence of superabundant logarithmic Severi varieties of curves on the plane $\mathbb{P}^2$. In their examples, they consider double covers of the plane and some totally tangent curves to the corresponding branch locus, which split in linearly equivalent curves in the double covering. These curves belong to the pushforward of the pullback under the double cover of linear systems of curves of a fixed degree on the plane: for this reason, the splitting curves are always $2$-divisible in $\pic(\mathbb{P}^2)$. Despite our examples arise in a similar context, the logarithmic Severi varieties we analyze behave differently with respect to the ones studied by Ciliberto and Dedieu. We study curves on a rational elliptic surface $S$ which splits in nonlinearly equivalent curves the K3 surface of base change type $X_m$ and they are not necessarily $2$-divisible in $\pic(S)$.
\begin{definition}
Let $R$ be a smooth surface, $B$ a curve on $R$ and let $T\rightarrow R$ be the double cover of $R$ branched at $B$.\\
    We call \textit{special} a superabundant component of a logarithmic Severi variety of curves on $R$ if it has dimension greater than expected and parametrizes curves which splits in two nonlinearly equivalent curves on $T$.
\end{definition}

The non base change very general K3 surfaces of base change type provide lots of examples of special components on rational elliptic surfaces. \par
Recall once more the main diagram
\begin{center}
                    \[\begin{tikzcd}[ampersand replacement=\&,row sep=large,column sep=huge]
                       \& X_m\arrow[dl,"g"] \arrow[dr,"f"]\\ S\&\&Y_m
                    \end{tikzcd}\]
\end{center} 
Let $C$ be a smooth irreducible $l$-section on $S$ of genus $p$. By Proposition \ref{partenzares}, the curves in the complete linear system $|C_X^{\boxplus k}|$ are identified with the curves on $|C_X^{\boxminus k}|$ by $g$. We call $V_{S,C,k}$ the family of the images on $S$ under $g$ of the irreducible curves inside $|C_X^{\boxplus k}|$. 
\begin{proposition}\label{logpartenzares}
    The family $V_{S,C,k}$ is a special superabundant component of a logarithmic Severi varieties of curves on $S$ of geometric genus $2p+l-1$.
\end{proposition}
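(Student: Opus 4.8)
The plan is to exhibit $V_{S,C,k}$ as an irreducible component of the logarithmic Severi variety $V_{\gamma,\alpha}(S,T,L)$ whose fixed curve is the branch divisor $T:=S_{t_0}+S_{t_{\infty}}$ of $g\colon X_m\to S$ and whose class is $L:=[C_S^{\boxplus k}]$. Writing $F$ for the class of a fibre of the elliptic pencil on the rational elliptic surface $S$, so that $K_S=-F$ and $T\sim 2F$, I would first pin down the two invariants that feed Dedieu's formula. By Proposition \ref{partenzares} the preimage $g^{-1}(C_S^{\boxplus k})=C_{X,m}^{\boxplus k}\cup C_{X,m}^{\boxminus k}$ splits, so $C_S^{\boxplus k}$ meets $T$ with even multiplicity everywhere; more precisely, a general member $C_{X,m}^{\boxplus k}$ meets each of the two ramification fibres of $g$ transversally in $l$ points (the lifts of the $l$ points of $C$ on $S_{t_0}$, resp. $S_{t_{\infty}}$), and a local computation at the ramification of a double cover turns each such transverse point into a simple tangency of the image with $T$. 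Hence the tangency profile is $\alpha=[0,2l,0,\dots]$, so that $|\alpha|=2l$ and $\mathcal I\alpha=4l=L\cdot T$. The geometric genus $\gamma=2p+l-1$ is the one already computed by Riemann--Hurwitz in Theorem \ref{equifromres}, and the fact that $C_{X,m}^{\boxplus k}$ and $C_{X,m}^{\boxminus k}$ are \emph{not} linearly equivalent---the \emph{special} part of the statement---follows by the same torsion argument as in Theorem \ref{equifromres}, resting on the infinite order of $R_m$ (Lemma \ref{inford}).

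Next I would compute $\dim V_{S,C,k}$. The morphism $|C_{X,m}^{\boxplus k}|\to V_{S,C,k}$, $D\mapsto g(D)$, is injective on the dense locus of irreducible members: if $g(D)=g(D')$ then $D'\in\{D,\iota(D)\}$, and $\iota(D)$ lies in $|C_{X,m}^{\boxminus k}|\neq|C_{X,m}^{\boxplus k}|$ by the nonlinear equivalence just established, forcing $D'=D$. Therefore $\dim V_{S,C,k}=\dim|C_{X,m}^{\boxplus k}|$. Since $X_m$ is a K3 surface and the general member of $|C_{X,m}^{\boxplus k}|$ is a smooth irreducible curve (Lemma \ref{complete}), Riemann--Roch on $X_m$ together with the standard vanishing for such curves gives $\dim|C_{X,m}^{\boxplus k}|=p_a(C_{X,m}^{\boxplus k})=2p+l-1=\gamma$.

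Now I would substitute into the expected-dimension formula of Theorem \ref{dedieu}(i). With $K_S+T=-F+2F=F$, with $L\cdot F=2l$ (as $C_S^{\boxplus k}$ is a $2l$-section, being the birational image under $g$ of the $l$-section $C_{X,m}^{\boxplus k}$), and with $\gamma=2p+l-1$, $|\alpha|=2l$, the expected dimension is
\[
-(K_S+T)\cdot L+\gamma-1+|\alpha|=-2l+(2p+l-1)-1+2l=2p+l-2,
\]
whereas the actual dimension is $2p+l-1$. Thus $\dim V_{S,C,k}$ exceeds the expected value by one and meets the superabundance threshold $-(K_S+T)\cdot L+\gamma+|\alpha|=2p+l-1$ exactly, so $V_{S,C,k}$ is superabundant. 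This is consistent with the failure of the hypothesis of Theorem \ref{dedieu}(i): with $D=\sum_{j}Q_{2,j}$ of degree $2l$ one has $-K_S\cdot C_S^{\boxplus k}-\deg\mu_*D=2l-2l=0<1$, exactly the degenerate regime in which Dedieu's count need not hold.

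It remains to see that $V_{S,C,k}$ is an actual irreducible component rather than a proper superabundant subfamily. Here I would argue that every member of $V_{\gamma,\alpha}(S,T,L)$ is totally tangent to $T$---the profile $\alpha=[0,2l,0,\dots]$ forces all local intersection multiplicities with $T$ to be even---hence splits under $g$ into a pair of conjugate curves whose classes are locally constant in families; since $V_{S,C,k}$ is the irreducible image of the complete system $|C_{X,m}^{\boxplus k}|$, it is open and closed inside the locus of such curves whose splitting has a component in the fixed class $[C_{X,m}^{\boxplus k}]$, hence an irreducible component. I expect this last step to be the main obstacle, together with the correct identification of the inputs $T\sim 2F$ and $\alpha=[0,2l,0,\dots]$: it is precisely the factor $2F$ in the branch class and the order-$2$ tangencies that produce the $+1$ excess, so that getting this bookkeeping right is what separates genuine superabundance from the naive transverse count.
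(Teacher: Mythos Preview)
Your proposal is correct and follows essentially the same approach as the paper: identify $T=S_{t_0}+S_{t_\infty}\sim 2F$, read off the tangency profile $\alpha=[0,2l]$ and the geometric genus $2p+l-1$ from the splitting under $g$, then compare the actual dimension $\dim|C_{X,m}^{\boxplus k}|=2p+l-1$ to Dedieu's expected value $2p+l-2$. The paper's own proof is considerably terser and simply asserts several of the points you justify---in particular it does not spell out the nonlinear equivalence of $C_{X,m}^{\boxplus k}$ and $C_{X,m}^{\boxminus k}$ (your torsion argument, giving $R_m$ of order dividing $2lk$, is the right adaptation of Theorem~\ref{equifromres}), nor does it address why $V_{S,C,k}$ is an irreducible component rather than a proper subfamily; your extra paragraphs on these points, and on the failure of the hypothesis of Theorem~\ref{dedieu}(i), are genuine additions rather than deviations.
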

\begin{proof}
The curves inside $V_{S,C,k}$ are $2l$-sections for $S$. Since they split on $X_m$, they are totally tangent to the branch locus $S_{t_0}\cup S_{t_{\infty}}$ in $l$ (possibly coincident) points for each fixed fiber. Their geometric genus is equal to the arithmetic genus of $C_X^{\boxplus k}$, which is $2p+l-1$ (see the proof of Proposition \ref{partenzares}). Equivalently, they belong to the logarithmic Severi variety $V:=V_{2p+l-1,[0,2l]}(S,S_{t_0}\cup S_{t_{\infty}},C_S^{\boxplus k})$. The expected dimension of $V$ is
\begin{center}
    $-(K_S+2F)\cdot C_S^{\boxplus k}+2p+l-1-1+2l=-2l+2p+l-2+2l=2p+l-2$,
\end{center}while the actual dimention of $V_{S,C,k}$ is \begin{center}
    $\dim(V_{S,C,k})=\dim(|C_X^{\boxplus k}|)=p_a(C_X^{\boxplus k})= 2p+l-1$.
\end{center}
\end{proof}
Let now $L\subset Y_m$ be a smooth curve of genus $g$. We call $V_{S,L,k}$ the family of the images on $Y_m$ under $f:X_m\rightarrow Y_m$ of the irreducible curves in the complete linear system $|L_X^{\boxplus k}|$
With the same arguments used for proving Proposition \ref{logpartenzares}, one proves the following proposition.
\begin{proposition}\label{logpartenzaenr}
    The family $V_{S,L,k}$ is a special superabundant component of a logarithmic Severi varieties of curves on $S$ of geometric genus $2g-1$.
\end{proposition}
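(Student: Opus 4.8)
The plan is to run the argument of Proposition \ref{logpartenzares} once more, with $g:X_m\to S$ again playing the rôle of the ramified double cover --- branched along $T:=S_{t_0}\cup S_{t_{\infty}}\equiv 2F$, where $F$ is the fibre class of $S$ --- the only substantial change being that the genus of the curves upstairs is now governed by the \'etale cover $f:X_m\to Y_m$ instead of by $g$. A member of $V_{S,L,k}$ is the curve $L_S^{\boxplus k}=g(L_X^{\boxplus k})\subset S$, where $L_X^{\boxplus k}:=f^{-1}(L)\boxplus R_m^{\boxplus k}$ ranges over the irreducible members of the complete linear system $|L_X^{\boxplus k}|$; by the lemma preceding Theorem \ref{equifromenr} its general member is smooth and irreducible. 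By Proposition \ref{partenzaenr} one has $g^{-1}(L_S^{\boxplus k})=L_X^{\boxplus k}\cup L_X^{\boxminus(k+1)}$, so $L_S^{\boxplus k}$ splits in $X_m$ and is therefore totally tangent to the branch locus $T$.

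First I would check that $L_X^{\boxplus k}$ and $L_X^{\boxminus(k+1)}$ are not linearly equivalent; this is what makes $V_{S,L,k}$ \emph{special}. One repeats the torsion argument of the proofs of Theorems \ref{equifromres} and \ref{equifromenr}: restricting a putative linear equivalence to a general fibre $X_t$ and adding up the points in the group law forces $(L\cdot Y_t)(2k+1)\,R_m$ to vanish on the general fibre, i.e.\ $R_m$ to be a torsion section, contradicting Lemma \ref{inford}; here one uses that $L$ is neither a fibre nor a half-fibre of $\epsilon_Y$, so that $L\cdot Y_t\ge 1$. In particular $L_X^{\boxplus k}$ is not $\iota$-invariant, so $g$ restricts to a birational morphism $L_X^{\boxplus k}\to L_S^{\boxplus k}$ and the geometric genus of $L_S^{\boxplus k}$ equals $p_a(L_X^{\boxplus k})$. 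Since $f$ is \'etale and translation by $R_m^{\boxplus k}$ is an automorphism of $X_m$, the Riemann--Hurwitz formula yields $p_a(L_X^{\boxplus k})=p_a(f^{-1}(L))=2g-1$, exactly as recorded in the proof of Theorem \ref{equifromenr}.

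Then I would run the dimension count. Writing $d:=L\cdot Y_t$, the curve $L_S^{\boxplus k}$ is a $2d$-section of $S$, totally tangent to $T$ in $d$ (possibly coincident) points on each of $S_{t_0}$ and $S_{t_{\infty}}$, so that $V_{S,L,k}\subseteq V:=V_{2g-1,[0,2d]}(S,T,L_S^{\boxplus k})$ with $|\alpha|=2d$ and $\mathcal{I}\alpha=L_S^{\boxplus k}\cdot T=4d$. Using $K_S\equiv -F$, $T\equiv 2F$ and $L_S^{\boxplus k}\cdot F=2d$, the expected dimension of $V$ is \begin{center}$-(K_S+T)\cdot L_S^{\boxplus k}+(2g-1)-1+2d=-2d+(2g-2)+2d=2g-2$,\end{center} while, since $g$ is birational on $L_X^{\boxplus k}$ and hence distinct members of $|L_X^{\boxplus k}|$ have distinct images, the actual dimension is \begin{center}$\dim V_{S,L,k}=\dim|L_X^{\boxplus k}|=p_a(L_X^{\boxplus k})=2g-1$,\end{center} the last equality being the usual Riemann--Roch computation on the K3 surface $X_m$, as in the proof of Proposition \ref{logpartenzares}. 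Hence $\dim V_{S,L,k}=2g-1>2g-2$, so $V_{S,L,k}$ is superabundant; being composed of curves that split on $X_m$ into the two nonlinearly equivalent curves $L_X^{\boxplus k}$ and $L_X^{\boxminus(k+1)}$, it is special. Finally, as in Proposition \ref{logpartenzares}, one checks that $V_{S,L,k}$ is an irreducible \emph{component} of $V$ and not a proper sublocus of a larger one: a curve near $L_S^{\boxplus k}$ in $V$ that still splits and is still totally tangent to $T$ deforms inside $|L_X^{\boxplus k}|$. This produces a special superabundant component of a logarithmic Severi variety of curves on $S$ of geometric genus $2g-1$.

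The one step I expect to require real care is this last one: controlling the local structure of the logarithmic Severi variety along the locus of split, totally tangent curves, so as to conclude that $V_{S,L,k}$ is a full component rather than a sublocus of a superabundant component. The dimension count itself is routine once Dedieu's formula (Theorem \ref{dedieu}) and Riemann--Roch on $X_m$ are in hand, and the failure of linear equivalence of $L_X^{\boxplus k}$ and $L_X^{\boxminus(k+1)}$ is immediate from Lemma \ref{inford}.
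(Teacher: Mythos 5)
Your proposal is correct and follows essentially the same route as the paper, which simply states that Proposition \ref{logpartenzaenr} is proved ``with the same arguments used for proving Proposition \ref{logpartenzares}'': the splitting and non-linear-equivalence come from Proposition \ref{partenzaenr} and the torsion argument via Lemma \ref{inford}, the genus $2g-1$ from the \'etale Riemann--Hurwitz computation already recorded in Theorem \ref{equifromenr}, and the dimension count compares the expected dimension $2g-2$ from Theorem \ref{dedieu} with $\dim|L_X^{\boxplus k}|=p_a(L_X^{\boxplus k})=2g-1$ on the K3 cover. Your closing caveat about checking that $V_{S,L,k}$ is a genuine component rather than a sublocus is a fair point, but the paper does not address it in the model proof either, so your write-up matches the intended argument.
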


\begin{remark}
    The existence of the special logarithmic Severi varieties $V_{S,C,k}$ and $V_{S,L,k}$ is due to the presence on $S$ of the rational bisection $B_{S,m}$ and then to the choice of the nongeneral pair ($S_{t_0},S_{t_\infty})$ of fixed fibers. In other words, the non-base change very generality of $X_m$ is crucial for the existence of special superabundant logarithmic Severi varieties of curves on $S$.
\end{remark}
The proof of Theorem \ref{teorres} is the same as the proof of Theorem \ref{teorenr} given at the end of the previous subsection.

  \end{document}